\documentclass[11pt,reqno]{amsart}
\usepackage{dsfont}

\usepackage{enumitem}
\usepackage{bbm}
\usepackage{times}
\usepackage{tabularx}
\usepackage{amsbsy,amssymb,amsmath,amsthm,amscd,amsfonts,latexsym}
\usepackage{mathtools}
\usepackage{tikz}
\usetikzlibrary{arrows,decorations.markings}
\usetikzlibrary{matrix}
\usetikzlibrary{graphs}
\usetikzlibrary{backgrounds}
\usepackage{setspace}     \spacing{1}
\usepackage{color}
\usepackage{mathrsfs}
\usepackage{float}
\usepackage{indentfirst}
\usepackage{graphicx}
\usepackage[colorlinks=true,linkcolor=blue,citecolor=blue]{hyperref} 
\newtheorem{thm}{Theorem}
\newtheorem{prop}{Proposition}
\newtheorem{lem}[prop]{Lemma}
\newtheorem{cor}[prop]{Corollary}

\theoremstyle{definition}

\newtheorem{df}[prop]{Definition} 

\newtheorem{conj}[prop]{Conjecture}

\theoremstyle{remark}

\newtheorem{rmk}[prop]{Remark} 
\newtheorem{example}[prop]{Example}

\newcommand{\F}{{\mathbb{F}}}
\newcommand{\R}{{\mathbb{R}}}
\newcommand{\Z}{{\mathbb{Z}}}
\newcommand{\C}{{\mathbb{C}}}

\newcommand{\N}{{\mathbb{N}}}

\newcommand{\bD}{{\mathbb{D}}}

\newcommand{\M}{\mathcal{M}}

\newcommand{\bH}{\mathbb{H}}
\newcommand{\ham}{\mathcal{H}am}

\newcommand{\cL}{\mathcal{O}}

\newcommand{\cls}{c_{LS}}

\newcommand{\overbar}{\overline}

\newcommand{\al}{\alpha}

\newcommand{\ga}{\gamma}

\newcommand{\T}{\mathbb{T}}

\newcommand{\ox}{\overline{x}}

\newcommand{\cA}{\mathcal{A}}

\newcommand{\cD}{\mathcal{D}}

\newcommand{\cH}{\mathcal{H}}

\newcommand{\cJ}{\mathcal{J}}

\newcommand{\cO}{\mathcal{O}}

\newcommand{\cP}{\mathcal{P}}

\newcommand{\cT}{\mathcal{T}}
\newcommand{\cM}{\mathcal{M}}

\newcommand{\brat}[1]{{\left< #1 \right>}}
\newcommand{\tens}{\otimes}
\newcommand{\ahl}{\mathcal{A}_{H,L}}

\DeclareMathOperator{\aut}{\mathrm{Aut}}
\DeclareMathOperator{\im}{\mathrm{Im}}
\DeclareMathOperator{\spec}{\mathrm{Spec}}

\DeclareMathOperator{\crit}{{\mathrm{Crit}}}
\DeclareMathOperator{\ind}{{\mathrm{ind}}}
\DeclareMathOperator{\virdim}{\mathrm{vir-dim}}


\numberwithin{equation}{section}


\textwidth=15cm \textheight=21cm
\parindent=16pt


\title[Lagrangian intersections and a conjecture of Arnol'd]{Lagrangian intersections and a conjecture of Arnol'd}

\author{Wenmin Gong}


\address{Laboratory of Mathematics and Complex Systems (Ministry
of Education), School of Mathematical Sciences, Beijing Normal University,
	Beijing, 100875, China}

\email{ wmgong@bnu.edu.cn}

\begin{document}
\maketitle

\begin{abstract}
We prove a degenerate homological  Arnol'd conjecture on Lagrangian intersections beyond the case studied by A. Floer and H. Hofer via a new version of Lagrangian Ljusternik--Schnirelman theory. We introduce the notion of (Lagrangian) fundamental quantum factorizations and use them to give some uniform lower bounds of the numbers of Lagrangian intersections  for some classical examples including Clifford tori in complex projective spaces.  Additionally, we use the Lagrangian Ljusternik-Schnirelman theory to study the size of the intersection of a monotone Lagrangian with its image of a Hamiltonian diffeomorphism.

\end{abstract}

\maketitle

\section{Introduction}\label{sec:1}

\noindent

In the present paper we are interested in Lagrangian
intersections in symplectic manifolds. Specifically, given a compact Lagrangian submanifold $L$ of a symplectic manifold $(M,\omega)$ and a Hamiltonian diffeomorphism $\varphi$, one would like to bound from below the number of intersection points of $L$ and $\varphi(L)$. In its strongest form, the Lagrangian version of the Arnol'd conjecture, known for about 60 years,   asserts that (possibly under additional conditions)
the number of intersection points of $L$ and $\varphi(L)$ is bounded from below by the minimal possible number of critical points that a smooth function on $L$ may have, see~\cite{Ar0,Ar}. This version of the conjecture is still open, but Floer theory~\cite{Fl1} solves a weaker version of the conjecture, i.e., whenever $L$ and $\varphi(L)$ intersect transversely, the number should be at least the sum of the Betti numbers of the homology group $H_*(L,\F)$ with coefficients in any field $\F$; this weaker version is known as the nondegenerate homological Arnol'd conjecture.

A somewhat stronger version of conjecture (though still weaker than the one mentioned above) is that, without the hypothesis that $L$ and $\varphi(L)$ intersect transversely, the number of intersection points must be at least the cup-length of $L$, which is defined as
\begin{eqnarray}
cl(L):=\max\big\{k+1:&\exists\; a_i\in H_{d_i}(L,\F),\; d_i<n,\;i=1,\ldots,k\notag\\
&\hbox{such that}\;a_1\cap\ldots \cap a_k\neq0\big\}.\notag
\end{eqnarray}

Even though this degenerate version of homological Arnol'd conjecture is generally open, it has been settled in certain special cases.  Hofer~\cite{Ho} proved it for the zero section of a cotangent bundle $T^*L$, see also~\cite{LS} by Laudenbach and Sikorav.  Under the assumption that $\pi_2(M,L)=0$ or $\omega(\pi_2(M,L))=0$, Floer~\cite{Fl2} and Hofer~\cite{Ho1} proved it. Liu~\cite{Liu} proved it under the condition that the Hofer norm of a Hamiltonian diffeomorphism $\|\varphi\|_{Hofer}$ is strictly less than the minimal area of a non-constant $J$-holomorphic disk on $L$ or a non-constant $J$-holomorphic sphere in $M$ (this condition was first used in the work of Chekanov~\cite{Ch1,Ch2}).

\begin{conj}[{Degenerate homological Arnol'd conjecture (cf.~\cite[Chapter~11]{MS0})}]\label{conj:fixpt}
Let $(M,\omega)$ be a closed symplectic manifold. Then for any Hamiltonian diffeomorphism $\varphi$ of $M$,  the number of the fixed points of $\varphi$ is at least the $\F$-cup-length of $M$.
\end{conj}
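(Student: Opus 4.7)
The plan is to deduce Conjecture~\ref{conj:fixpt} from Conjecture~\ref{conj:intersection} via the classical diagonal construction, which identifies the fixed points of a Hamiltonian diffeomorphism of $M$ with Lagrangian intersection points in a product. Concretely, I work in the closed (and hence tame) symplectic manifold $(M\times\overline{M},\,\omega\oplus(-\omega))$, which inherits monotonicity from $(M,\omega)$, and consider the diagonal
$$\Delta:=\{(x,x):x\in M\}\subset M\times\overline{M}.$$
It is a compact Lagrangian submanifold diffeomorphic to $M$, so in particular $cl(\Delta)=cl(M)$.

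Given $\varphi=\varphi_H^1$ for some $H\in\cH$, I lift $H$ to the product by setting $\widetilde H(t,x,y):=H(t,x)$. Its Hamiltonian flow is $\Phi^t(x,y)=(\varphi_H^t(x),y)$, so $\Phi:=\Phi^1\in\ham(M\times\overline{M})$ carries $\Delta$ to the graph of $\varphi$; the assignment $(x,x)\mapsto x$ yields a bijection between $\Phi(\Delta)\cap\Delta$ and $\operatorname{Fix}(\varphi)$. Assuming Conjecture~\ref{conj:intersection} can be applied to the pair $(\Delta,\Phi)$, one obtains immediately
$$\#\operatorname{Fix}(\varphi)=\#\bigl(\Phi(\Delta)\cap\Delta\bigr)\;\geq\;cl(\Delta)\;=\;cl(M),$$
which is exactly the content of Conjecture~\ref{conj:fixpt}.

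It remains to verify the hypotheses of Conjecture~\ref{conj:intersection} for $\Delta$. Compactness is clear, and monotonicity of $\Delta$ in $M\times\overline{M}$ follows from the standard observation that a relative disc with boundary on $\Delta$ can be reassembled, via the two coordinate projections, into a sphere in $M$ whose symplectic area and Chern number recover the symplectic area and half the Maslov index of the original disc; hence if $M$ is monotone with constant $\tau$, then so is $\Delta$ with the same constant. The main obstacle is the non-displaceability of $\Delta$, which must be established independently of Conjecture~\ref{conj:fixpt} to avoid circularity. The cleanest input is Floer-theoretic: for closed monotone $M$, the Lagrangian Floer homology $HF(\Delta,\Delta;\F)$ is well defined and canonically isomorphic to the quantum cohomology $QH^*(M;\F)$, which is nonzero. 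This forces $\Phi'(\Delta)\cap\Delta\neq\emptyset$ for every $\Phi'\in\ham(M\times\overline{M})$, so $\Delta$ is non-displaceable. Alternatively, one can invoke Entov--Polterovich's theory of (super)heavy subsets to the same effect. With non-displaceability in hand, Conjecture~\ref{conj:intersection} supplies the inequality above and the argument concludes.
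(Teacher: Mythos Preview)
Your proposal is correct and follows exactly the route the paper has in mind: the statement in question is a \emph{conjecture}, and the only ``proof'' the paper offers is the one-line claim preceding it, namely that an affirmative answer to Conjecture~\ref{conj:intersection} implies Conjecture~\ref{conj:fixpt}. Your diagonal argument is the standard justification of this implication (and the paper itself invokes the diagonal $\Delta\subset M\times M^-$ with $N_\Delta=2C_M$ and $QH(\Delta)\cong QH(M)$ in the concluding remarks), so there is nothing to add.
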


An affirmative answer to the following Lagrangian version of homological Arnol'd conjecture, which can be seen as a variant of the \textbf{degenerate homological Arnol'd-Givental conjecture} \cite[Conjecture~11.3.1]{MS0}, would imply Conjecture~\ref{conj:fixpt} for monotone symplectic manifolds.
\begin{conj}\label{conj:intersection}
Assume that $L$ is a closed non-displaceable monotone Lagrangian submanifold of a closed symplectic manifold $M$. For any Hamiltonian diffeomorphism $\varphi$ of $M$, the number of intersection points of $\varphi(L)$ and $L$ is at least the $\F$-cup-length of $L$.
\end{conj}

On page 577 of \cite{Fl3}, Floer states that ``Our methods do not yield cup-length estimates for $\sharp \hbox{Fix}(\varphi)$ for general monotone manifolds. (We tend to believe that there are more than technical reasons for this)." This statement still seems to hold true today. All known approaches to Conjecture~\ref{conj:fixpt} encounter conceptual difficulties in distinguishing contributions from the same periodic orbit with different cappings (a cap of $1$-periodic orbit $\gamma$ is a topological $2$-disk with boundary $\gamma$, cf. Section~\ref{sec:hfh}). This difficulty is naturally present in Conjecture~\ref{conj:intersection}, but it is even more involved now.

Lagrangian Floer homology (see ~\cite{FOOO2} for the general definition) is a powerful tool for solving the  Arnol'd conjecture in the non-degenerate case since the seminal work of Floer~\cite{Fl1,Fl3}. However, it seems to the author that there are relatively few results  in literature about Conjecture~\ref{conj:intersection} so far. There is still room for further development in this direction. A possible candidate to do this is to apply Ljusternik--Schnirelman theory as we have already seen its usefulness in previous works ~\cite{Ho1,Fl2,Fl3,LO,Sc2,EG1,EG2,GG,GG2,Go}. This is the very reason to extend Ljusternik-Schnirelman theory to the monotone Lagrangian submanifolds in the present paper.  Very recently, 
there are new developments in this direction, see~\cite{HP,AAC}. As a main application of this theory, we show that

\begin{thm}\label{cor:RPn}  Conjecture~\ref{conj:intersection} is true for $(M,L)=(\mathbb{C}P^n,\;\R P^n),\;(\mathbb{C}P^n\times( \mathbb{C}P^n)^-,\;\Delta_{\mathbb{C}P^n}), \;$
$(Gr(2,2n+2),\;\bH P^n)$, $(Q^n,\;S^n)$. 
	
\end{thm}

Note that each of these four Lagrangians $L$ is the fixed-point set of an
antisymplectic involution on $M$ (cf.~\cite[Section~3]{KS}), and thus the degenerate homological Arnol'd-Givental conjecture holds for them.
Theorem~\ref{cor:RPn} recovers the well-known result by Givental~\cite{Gi} that $\sharp(\R P^n\cap\varphi(\R P^n))\geq n+1$ for all Hamiltonian diffeomorphisms $\varphi$ of $\mathbb{C}P^n$, see Chang and Jiang~\cite{CJ} for a different proof.  
We also mention that this result was generalized by Lu~\cite{Lu} to the weighted projective spaces $(\mathbb{C}P^n(\mathbf{q}),\;\R P^n(\mathbf{q}))$ with odd weights $\mathbf{q}=(q_1,\ldots,q_{n+1})\in\N^{n+1}$.

Our next main theorem is:
\begin{thm}\label{thm:2pt}
Let $\T_{Clif}^n$ be the Clifford torus in $\mathbb{C}P^n$.
For any Hamiltonian diffeomorphism $\varphi$ of $\mathbb{C}P^n$, $\varphi(\T_{Clif}^n)$ has at least two intersection points with $\T_{Clif}^n$.
\end{thm}

Theorem~\ref{thm:2pt} strengthens a well-known result that $\T_{Clif}^n\subset \mathbb{C}P^n$ is not displaceable by a Hamiltonian isotopy, see~\cite{BEP,Cho}.

\medskip

\subsection{Notations and conventions}\label{subsec:notation}
Throughout this paper, we assume that $(M,\omega)$ is a connected and tame symplectic
manifold (see~\cite{ALP}). Such manifolds include closed symplectic manifolds, open manifolds
that are symplectically convex at infinity, as well as products of such. We denote
by $\cJ$ the space of $\omega$-compatible almost complex structures such that
$(M,g_{J})$ is geometrically bounded, where for $J\in\cJ$, $g_{J}
(\cdot,\cdot)=\omega(\cdot,J\cdot)$ is the associated Riemannian metric. Denote  by $\cH$ the space of functions $H\in C^\infty([0,1]\times M,\R)$ such that 
$H(t,x)$ is constant with respect to $x$ outside of some compact set of $M$ for all $t\in[0,1]$.  We denote by
$\{\varphi_H^t\}_{t\in[0,1]}$ the Hamiltonian isotopy of $H$ obtained by
integrating the time-dependent vector field $X_{H_t}$, where $H_t=H(t,\cdot)$ and
$X_{H_t}$ is determined uniquely by $-dH_t=\omega(X_{H_t},\cdot)$. We denote
by $\ham(M,\omega)$ the group of all Hamiltonian
diffeomorphisms generated by elements of $\cH$.

Recall that a Lagrangian submanifold $L\subseteq M$ is called \textit{monotone} if the two homomorphisms
$$\omega:\pi_2(M,L)\to\R,\quad \mu: \pi_2(M,L)\to\Z,$$
given by pairing with the symplectic form and the Maslov class, respectively, satisfy
$$\omega=\kappa_L\mu\quad \hbox{for some positive constant } \kappa_L.$$
We define the \textit{minimal Maslov number} of $L$ to be the integer
$$N_L=\min\big\{\mu(A)|A\in\pi_2(M,L),\;\mu(A)>0\big\}.$$

Throughout this paper we assume that all $L$ are connected and closed monotone submanifolds.  In this case it is known that
$(M,\omega)$ is (spherically) \textit{monotone}, which means that
$$\omega(A)=2\kappa_Lc_1(A),\quad \forall A\in\pi_2(M),$$
where $c_1=c_1(TM,\omega)$ is the first Chern class of $M$. We denote by $C_M$ the minimal positive Chern number of $M$
$$C_M=\min\big\{c_1(A)|A\in\pi_2(M),\;c_1(A)>0\big\}.$$
If the homomorphisms $\omega$ and $\mu$ vanish on $\pi_2(M,\omega)$, i.e.,
$$\omega(A)=\mu(A)=0,\quad A\in \pi_2(M,L),$$
we call $L$ \textit{weakly exact}, and in this case we have $N_L=\infty$. Similarly, if
$$\omega(A)=c_1(A)=0,\quad \forall A\in \pi_2(M),$$
we call $(M,\omega)$ \textit{symplectically aspherical}.
For any monotone Lagrangian $L$ of $(M,\omega)$  we have that $N_L$ divides $2C_M$. In what follows we denote by $A_L=\kappa_LN_L$ the minimal positive generator of $\omega(\pi_2(M,L))$, and let $A_L=0$ in the weakly exact case.
Since the Maslov numbers are multiples of $N_L$, for simplicity we also use the notation
$$\overbar{\mu}=\frac{1}{N_L}\mu: \pi_2(M,L)\to\Z$$

In this paper we work with $\Z_2$-coefficient unless otherwise specified.
Throughout we assume that the minimal Maslov number of $L$ is at least two, i.e., $N_L\ge 2$. The reason for   this assumption is that as the main tools in this paper, Lagrangian Floer homology~$HF(L)$ and Lagrangian quantum homology $QH(L)$ with $\Z_2$-coefficient are well-defined, see~\cite{Oh1,BC}. 
For a monotone Lagrangian submanifold $L$, we denote by $\Lambda=\Z_2[t^{-1},t]]$ the Novikov field of semi-infinite Laurent series in $t$ graded by $\deg t=-N_L$. Each element of $\Lambda$ is a semi-infinite sum $\sum_{k\geq 0}a_kt^{l_k}$, where $a_k\in\{0,1\}$ and the sequence $\{l_k\}_{k=0}^\infty\subseteq\Z$ diverges to $+\infty$ as $k$ tends to infinity.

 We define a valuation map $\nu:\Lambda\to \Z\cup \{-\infty\}$ as
\begin{equation}\label{e:val}
\nu\bigg(\sum\limits_{k\geq 0} a_kt^{l_k}\bigg)=\max\big\{-l_k|a_k\neq 0\big\}.
\end{equation}
Similarly, let $\Gamma=\Z_2[s^{-1},s]]$ be the Novikov field of semi-infinite Laurent polynomials in $s$, where the degree of $s$ is $-2C_M$. Sending $s$ to $t^{2C_M/N_L}$ induces a degree preserving ring homomorphism $\Gamma\to \Lambda$; this will be relevant when comparing the quantum homologies of $M$ and the pair $(M,L)$.

Notice that the construction of Lagrangian Floer  homology for closed monotone Lagrangians can be extended to the setting of tame symplectic manifolds. The
analytical difficulties are present already in the closed case, and have been worked
out in \cite{Fl1,Oh1,BC,LZ}, etc. The generalization from closed manifolds to tame manifolds is straightforward and follows from the monotonicity
theorem in \cite{Gr} since all the Hamiltonian systems under consideration are compactly supported. 

Recall that~\cite{BC,BC2,BC3} if there exists an isomorphism $HF(L)\cong H(L,\Z_2)\tens\Lambda$ as vector spaces over $\Lambda$,  then $L$ is said to be \emph{wide}; if $HF(L)=0$ then $L$ is said to be \emph{narrow}. Examples of wide Lagrangians are $\R P^n$, the Clifford torus in $\mathbb{C}P^n$, and weakly exact Lagrangian submanifolds. Note that all of those Lagrangians appearing in Theorems~\ref{cor:RPn} and \ref{thm:2pt} are wide, see~\cite{BC,KS}.

Throughout this paper,  $QH(L)$ denotes the Lagrangian quantum homology of $L$ (cf. Section~\ref{sec:lqh}). We denote by $\circ: QH(L)\tens QH(L)\longrightarrow QH(L)$ the Lagrangian quantum product  and $\bullet: QH(M,\Lambda)\tens QH(L)\longrightarrow QH(L)$ the module structure. Let $I_\omega$ and $\nu$ be the valuation maps on $QH(M,\Lambda)$ and $QH_*(L)$ induced by (\ref{e:val}) (see the definitions~(\ref{e:val0}) and (\ref{e:val1})), respectively. Denote by $\ell:QH(L)\times \cH\to\R$ the Lagrangian spectral invariant. For the above definitions, we refer to Sections~\ref{sec:qh}, \ref{sec:lqh} and \ref{sec:lsi} for the details.

\subsection{Main results}\label{subsec:Main}
For a monotone non-narrow Lagrangian $L$ with $N_L\geq 2$, we denote  
\[
\widehat{QH}(L)=\bigoplus_{r\in\Z}\bigoplus_{n-(r+1)N_L<p<n-rN_L}QH_p(L).
\]
This vector space contains the subspace $H_{n-N_L<*<n}(L)\otimes \Lambda$  
which has a \emph{canonical} embedding into $QH(L)$, see Section~\ref{subsec:emb}. 

\begin{thm}[Lagrangian Ljusternik--Schnirelman inequality~I]\label{thm:lls}
	Let $L^n\subset (M^{2n},\omega)$ be a monotone wide Lagrangian  with $N_L\geq 2$.
	Let $H\in \cH$, and
	let $\alpha,\beta$ be two non-zero elements of $QH_*(L)$.  Then we have
	\[
	\ell(\al\circ \beta, H)\leq \ell(\beta, H)+A_L\nu(\al).
	\]
Assume that $\alpha\in\widehat{QH}(L)$, and the intersections of $L$ and $\varphi_H(L)$ are isolated. Then
\[
\ell(\al\circ \beta, H)< \ell(\beta, H)+A_L\nu(\al).
\]
Furthermore, if there exists a nonzero class $\al=\sum_i x_i \tens\lambda_i \in \widehat{QH}(L)$ with each homogeneous class $x_i\in H_{n-N_L<*<n}(L,\Z_2)$  and each $\lambda_i\in\Lambda$
such that
\[
\ell(\al\circ \beta, H)=\ell(\beta, H)+A_L\nu(\al),
\]
then $L\cap \varphi_H(L)$ is homologically non-trivial in $L$.
\end{thm}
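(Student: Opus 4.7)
The strategy is to derive all three parts from chain-level action filtration estimates for the Lagrangian quantum/Floer product, in the spirit of the classical triangle inequality for Hamiltonian spectral invariants (Schwarz, Oh, Usher, Leclercq--Zapolsky), now implemented through the Biran--Cornea pearl complex. For the first inequality, fix $\epsilon>0$ and pick a Floer cycle $b\in CF(L;H)$ representing $\beta$ with action filtration at most $\ell(\beta,H)+\epsilon$. Write $\alpha=\sum_i x_i\otimes t^{k_i}$ with $\nu(\alpha)=\max_i(-k_i)$ and choose pearl-complex representatives of the homogeneous classes $x_i$. The product $\alpha\circ b$ is realized geometrically as a count of pearly trajectories: Floer strips with ends on $L\cap\varphi_H(L)$ decorated by $J$-holomorphic disks with boundary on $L$. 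A standard energy computation shows that every configuration contributing to $x_i\otimes t^{k_i}$ has total symplectic area bounded by $(-k_i)A_L\leq A_L\nu(\alpha)$, so the resulting Floer cycle has action at most $\ell(\beta,H)+\epsilon+A_L\nu(\alpha)$; letting $\epsilon\to 0$ yields the stated inequality.

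For the strict inequality, assume $L$ is wide, $\alpha\in\widehat{QH}_*(L)$ is nonzero, and $L\cap\varphi_H(L)$ is a finite set. The wide hypothesis ensures that $\widehat{QH}_*(L)$ is complementary to $[L]\otimes\Lambda$, so representative pearl chains for $\alpha$ need not involve the unit $[L]$. Were equality to hold, one could extract a Floer cycle $c$ representing $\alpha\circ\beta$ carrying a generator $p\in L\cap\varphi_H(L)$ at action exactly $\ell(\beta,H)+A_L\nu(\alpha)$. A cancellation argument of Oh type then produces a homologous Floer cycle in which $p$ is exchanged for a nearby bounding piece, strictly decreasing the action filtration; the only obstruction to this cancellation would be a nonzero pearl contribution from the unit $[L]$, which is ruled out by $\alpha\in\widehat{QH}_*(L)$. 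This contradicts equality.

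For the last statement, since $N_L\geq n+1$ the Biran--Cornea embedding gives a canonical isomorphism $QH(L)\cong H(L,\Z_2)\otimes\Lambda$ with $\widehat{QH}_*(L)=H_{*<n}(L,\Z_2)\otimes\Lambda$, placing $\alpha$ in $\widehat{QH}_*(L)$. By the preceding part, equality with such $\alpha$ is impossible when the intersections are isolated, so $L\cap\varphi_H(L)$ must be positive-dimensional. Suppose toward a contradiction that $L\cap\varphi_H(L)$ is homologically trivial in $L$. Represent $\alpha\circ\beta$ by a perturbed Floer cycle whose level set at action $\ell(\alpha\circ\beta,H)$ projects onto a cycle in $L$ supported in $L\cap\varphi_H(L)$. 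A bounding chain in $L$ for this cycle can, via the PSS isomorphism and the pearl-to-Floer chain identification, be lifted to a Floer primitive whose subtraction strictly lowers the action below $\ell(\beta,H)+A_L\nu(\alpha)$, contradicting equality. The main technical obstacle is precisely this lifting step: when $L\cap\varphi_H(L)$ is not isolated, the Floer complex is only defined after a small perturbation, and one must ensure that the bounding chain chosen in $L$ can be carried compatibly through the perturbation and the PSS isomorphism while preserving the Floer homology class; this amounts to a chain-level compatibility between singular chains on $L$ and pearl/Floer cycles realizing the Lagrangian spectral invariant.
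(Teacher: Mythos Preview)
Your Part~1 is essentially correct and amounts to a chain-level re-derivation of the triangle inequality (LS8) specialized to $K=0$; the paper simply quotes (LS8) and (LS1) directly, but your argument is the standard way those properties are proved in the first place.

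Your Part~2, however, has a genuine gap. The ``cancellation argument of Oh type'' is not an argument: if $c$ is a Floer cycle representing $\alpha\circ\beta$ at the minimal possible action level (which \emph{is} the spectral value, by spectrality), you cannot simply exchange a top-action generator for a ``bounding piece'' and remain homologous---that would contradict the very definition of $\ell(\alpha\circ\beta,H)$ as an infimum over representatives. The condition $\alpha\in\widehat{QH}_*(L)$ does not by itself prevent the product $\alpha\circ\beta$ from being carried at the top action level. The paper's mechanism is entirely different and is the key idea you are missing: one introduces an auxiliary $C^2$-small function $f$ on $L$ with $f\equiv 0$ on a neighborhood $U$ of $L\cap\varphi_H(L)$ and $f<0$ elsewhere, lifts it to $H_f$ on $M$, and proves two facts. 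First (Lemma~4.2), the action spectrum of $\varepsilon H_f\sharp H$ coincides with that of $H$ for small $\varepsilon$, so $\ell(\alpha\circ\beta,H)=\ell(\alpha\circ\beta,\varepsilon H_f\sharp H)$. Second (Lemma~4.1), since $\alpha\in\widehat{QH}_*(L)$ can be represented by a pearl cycle avoiding the unique maximum of a Morse perturbation of $f$ (this is where wideness enters), one has $\ell(\alpha,\varepsilon H_f)<A_L\nu(\alpha)$. Combining these with the triangle inequality yields the strict inequality. The insertion of $\varepsilon H_f$ is what converts the soft condition ``$\alpha$ avoids $[L]$'' into a hard action drop.

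Your Part~3 inherits the same gap, and you yourself flag the unresolved ``lifting step.'' The paper again uses the auxiliary $H_f$: from $\ell(\alpha\circ\beta,H)=\ell(\alpha\circ\beta,\varepsilon H_f\sharp H)\le \ell(\alpha,\varepsilon H_f)+\ell(\beta,H)$ and the hypothesis of equality, one deduces $\ell(\alpha,\varepsilon H_f)\ge A_L\nu(\alpha)$. Since $N_L\ge n+1$ forces the canonical identification $QH(L)\cong H(L;\Z_2)\otimes\Lambda$, Proposition~\ref{pp:ls=minmax} gives $\ell(x_i,\varepsilon H_f)=c_{LS}(x_i,\varepsilon f)$ for each $x_i\in H_{*<n}(L;\Z_2)$, whence some $c_{LS}(x_k,\varepsilon f)=0=c_{LS}([L],\varepsilon f)$. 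Classical Ljusternik--Schnirelman theory (Proposition~\ref{pp:minmax}.7) then forces the zero level set $\overline{U}$ to be homologically non-trivial. No perturbation of the Floer complex or chain-level lifting is needed; the argument is reduced to finite-dimensional LS theory on $L$.
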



In the above statement a subset $S$ of a topological space $X$ is called \textit{homologically non-trivial} in $X$ if for every open neighborhood $V$ of $S$ the map $i_*:H_k(V)\to H_k(X)$ induced by the inclusion $i:V\hookrightarrow X$ is non-zero for some $k>0$.

\begin{rmk}
	The third assertion in Theorem~\ref{thm:lls} is motivated by a result of Howard in~\cite{How} . Recently, Buhovsky, Humili\`{e}re and Seyfadini~\cite{BHS1} adapted Howard's method and gave a generalisation of the  Arnol'd conjecture  to non-smooth settings on a closed symplectically aspherical manifold. For the closely related work, we refer to~\cite{BHS2,Ka}.
\end{rmk}

\begin{thm}[Lagrangian Ljusternik--Schnirelman inequality~II]\label{thm:ll}
	Suppose that $L^n$ is  a monotone non-narrow Lagrangian in a closed symplectic manifold 
	$(M^{2n},\omega)$ with $N_L\geq 2$. Let $H\in \cH$, $ a\in QH(M,\Lambda)$ and let $\al$ be a nonzero class in $QH(L)$. Then we have
$$\ell(a\bullet\al,H)\leq\ell(\al,H)+I_\omega(a).$$
If there exists a nonzero class $a=\sum_i x_i \tens_\Gamma\lambda_i\in QH(M,\Lambda)$ with each homogeneous class $x_i\in H_{*<2n}(M,\Z_2)$, then either 
$$\ell(a\bullet\al,H)<\ell(\al,H)+I_\omega(a).$$
or $L\cap \varphi_H(L)$ is homologically non-trivial in $M$.

\end{thm}

In the following, we will give an example of $(M,L,H)$ such that the \textbf{strict} inequality in Theorem~\ref{thm:ll} fails and $L\cap \varphi_H(L)$ is homologically non-trivial in $M$.
\begin{example}
	We denote by $h=[\C P^1]\in H_2(\C P^2,\Z_2)$ the class of a hyperplane so that $h$ generates $QH(\C P^2,\Lambda)$ (cf. Example~\ref{ex:cp}). Let $\alpha_k=[\R P^k]\in H_k(\R P^2,\Z_2), k=0,1,2$ be the generator in $QH_k(\R P^2)$. It can be shown that $h\bullet \alpha_2=\alpha_0$, see~\cite[Lemma~6.1.1]{BC}. 
	For any constant function $f=c$ on $\C P^2$, by the normalization property (LS1) of $\ell$ (cf. Section~\ref{sec:lsi}), we have
	$\ell(\alpha_2,f)=\ell(\alpha_0,f)=c$.  This, together with $I_\omega(h)=0$,  implies
	$$\ell(h\bullet\al_2,f)=\ell(\al_2,f)+I_\omega(h).$$
 Clearly, $ \R P^2\cap \varphi_f(\R P^2)=\R P^2$. It is a standard fact that $[\R P^2]$ defines a non-trivial class in $H_2(\C P^2,\Z_2)$, see \cite{Hat}. So $\R P^2\cap \varphi_f(\R P^2)$ is homologically non-trivial in $\C P^2$. 
	
\end{example}

For weakly exact or monotone Lagrangians with $N_L\geq 2$ in tame symplectic manifolds,  Lagrangian spectral invariants can be constructed by using Floer theory  as in~\cite{LZ,Le,MVZ} with minor modifications. 
Since there are no holomorphic disks with boundaries on a weakly exact Lagrangian, the Lagrangian quantum product coincides with the intersection product. In this case by convention $N_L=\infty$, $A_L=0$, and $L$ is obviously wide since $HF(L)\cong H(L,\Z_2)$ by Floer~\cite{Fl1}.  Thus, Theorem~\ref{thm:lls} implies the following.

\begin{cor}\label{cor:weak}
	Let $L$ be a closed weakly exact Lagrangian submanifold of a tame symplectic manifold $(M,\omega)$, and let $\varphi_H\in\ham(M,\omega)$. If the total number of spectral invariants of $(L,H)$ is smaller than $\Z_2$-cup-length of $L$, then $L\cap \varphi_H(L)$ is homologically non-trivial in $L$. 
\end{cor}

In particular, for $M=T^*L$, Corollary~\ref{cor:weak}  recovers the corresponding result implicitly contained in the work of Buhovsky, Humili\`{e}re and Seyfadini~\cite{BHS2}. Beyond the weakly exact case, the Lagrangian Ljusternik--Schnirelman inequalities~I and II
result in the following two corollaries.

\begin{cor}\label{cor:num}
	Let $L^n\subset (M^{2n},\omega)$ be a monotone wide Lagrangian  with minimal Maslov number $N_L\geq 2$.
	If there exist $k$ nonzero Lagrangian quantum homology classes $\al_i\in \widehat{QH}(L)$, $i=1,\ldots,k$ with $\nu(\al_i)<0$ and $0\neq\beta\in QH(L)$ such that $\al_1\circ\ldots\circ\al_k\circ\beta\neq0$, then for any $H\in \cH$,
	the total number of Lagrangian spectral invariants of the pair $(L,H)$ is at least $k+1$.
	
\end{cor}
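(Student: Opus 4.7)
\begin{pfs}
The plan is to apply Theorem~\ref{thm:lls} iteratively to the classes $\alpha_1,\ldots,\alpha_k$ and $\beta$ so as to produce a strictly decreasing chain of $k+1$ Lagrangian spectral invariants of $(L,H)$.

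Set $\beta_0:=\beta$ and inductively define $\beta_i:=\alpha_i\circ \beta_{i-1}$ for $i=1,\ldots,k$, after, if necessary, reordering the $\alpha_i$'s so that each $\beta_i$ remains nonzero. Applying the (non-strict) inequality of Theorem~\ref{thm:lls} at each step, with $\alpha=\alpha_i$ and with $\beta$ replaced by $\beta_{i-1}$, and using the hypothesis $\nu(\alpha_i)<0$ together with $A_L>0$, one obtains
\[
\ell(\beta_i,H)\leq \ell(\beta_{i-1},H)+A_L\nu(\alpha_i)\leq \ell(\beta_{i-1},H)-A_L<\ell(\beta_{i-1},H),
\]
so that already the non-strict form of the inequality produces strict decrease. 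Iterating for $i=1,\ldots,k$ yields the chain
\[
\ell(\beta_0,H)>\ell(\beta_1,H)>\cdots>\ell(\beta_k,H),
\]
which gives $k+1$ pairwise distinct values in $\Spec(L,H)$, and hence at least $k+1$ Lagrangian spectral invariants of $(L,H)$ up to a shift. Notice that this argument invokes only the non-strict form of Theorem~\ref{thm:lls}, so the hypothesis ``non-narrow'' (as opposed to ``wide'') is genuinely sufficient here, and no isolation assumption on $L\cap\varphi_H(L)$ is required.

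The main obstacle is to guarantee the non-vanishing of each iterated product $\beta_i=\alpha_i\circ\cdots\circ\alpha_1\circ\beta$. I expect to handle this via the $\Lambda$-module structure on $QH(L)$ recalled in Section~\ref{subsec:notation}: since $L$ is non-narrow one has $QH(L)=\widehat{QH}_*(L)\oplus [L]\otimes\Lambda$, the quantum product respects the valuation filtration, and each $\alpha_i\in\widehat{QH}_*(L)$ has $\nu(\alpha_i)<0$. A careful choice or reordering of the $\alpha_i$'s within their filtration pieces---possibly replacing some by linear combinations of classes of the same valuation---should ensure that no $\beta_i$ collapses to zero. The hypothesis that each $\alpha_i$ sits nonzero in $\widehat{QH}_*(L)$ is precisely what allows the chain to be pushed through all $k$ stages, and in borderline cases a small Hofer-continuous perturbation of $H$ together with the stability of the strict inequalities under such perturbations closes any remaining gap.
\end{pfs}
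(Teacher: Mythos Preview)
Your core argument is exactly what the paper intends (it says the corollary follows ``immediately'' from Theorem~\ref{thm:lls}): iterate the non-strict inequality, and since $\nu(\alpha_i)\leq -1$ forces $A_L\nu(\alpha_i)\leq -A_L<0$, each step yields a strict drop, producing the chain $\ell(\beta_0,H)>\cdots>\ell(\beta_k,H)$ of $k+1$ distinct spectral values. Your observation that only the non-strict inequality is needed, so that ``non-narrow'' (rather than ``wide'') suffices and no isolation hypothesis on $L\cap\varphi_H(L)$ is required, is also correct.

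Where your sketch goes wrong is the final paragraph. The non-vanishing of $\beta_i=\alpha_i\circ\cdots\circ\alpha_1\circ\beta$ cannot be rescued by either device you propose. Whether $\alpha_i\circ\beta_{i-1}=0$ is a purely algebraic fact about the ring $QH(L)$ and is entirely independent of $H$, so a Hofer-small perturbation of $H$ changes nothing. And reordering or replacing the $\alpha_i$ by linear combinations cannot manufacture a nonzero $k$-fold product from the given data: take, for instance, all $\alpha_i$ equal to a single class $\alpha\in\widehat{QH}(L)$ with $\alpha\circ\beta=0$, or $\alpha$ nilpotent of low order. The honest resolution is that the corollary as printed is elliptic; it should be read with the tacit hypothesis that the iterated product $\alpha_1\circ\cdots\circ\alpha_k\circ\beta$ (and hence each intermediate $\beta_i$) is nonzero. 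Under that reading your argument is complete and coincides with the paper's.
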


\begin{cor}
	Let $L^n$ be  a  monotone non-narrow Lagrangian in a closed symplectic manifold $ (M^{2n},\omega)$ with  $N_L\geq 2$. If  there exist $k$ nonzero quantum homology classes $a_i\in \widehat{QH}(M,\Lambda)$, $i=1,\ldots,k$ with $I_\omega(a_i)<0$ and $0\neq\beta\in QH(L)$ which satisfy $a_1\bullet\ldots\bullet a_k\bullet\beta\neq0$, then for any $H\in \cH$,  the total number of Lagrangian spectral invariants of the pair $(L,H)$ is at least $k+1$.
\end{cor}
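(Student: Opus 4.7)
\begin{pfs}
The plan is to apply Theorem~\ref{thm:ll} to produce $k+1$ nonzero quantum homology classes in $QH(L)$ whose spectral invariants under $H$ occupy $k+1$ pairwise distinct cosets of the period lattice $A_L\Z\subset\R$, which is precisely the statement that the number of Lagrangian spectral invariants of $(L,H)$ up to a shift is at least $k+1$.

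The natural candidates are $\beta_0:=\beta$ together with $\beta_i:=a_i\bullet\beta$ for $i=1,\ldots,k$. A first step is to confirm that each $\beta_i$ is nonzero in $QH(L)$: since $a_i\neq 0$ in $\widehat{QH}(M,\Lambda)$ and $\beta\neq 0$ in $QH(L)$, this is inherited from the $QH(M,\Lambda)$-module structure of $QH(L)$; in borderline cases one may replace $a_i$ by a nonzero $\Lambda$-multiple of the same $I_\omega$-valuation. Applying Theorem~\ref{thm:ll} then gives, for each $i$,
\begin{equation*}
\ell(\beta_i, H) \;\leq\; \ell(\beta, H) + I_\omega(a_i) \;<\; \ell(\beta, H),
\end{equation*}
so every $\ell(\beta_i, H)$ lies strictly below $\ell(\beta, H)$ on the real line.

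To upgrade this to distinctness modulo $A_L\Z$, I would first perturb $H$ to a Hofer-nearby Hamiltonian $H'$ for which $L$ and $\varphi_{H'}(L)$ meet transversally, activating the strict form of Theorem~\ref{thm:ll}. After reindexing so that $I_\omega(a_1)>I_\omega(a_2)>\cdots>I_\omega(a_k)$ and multiplying each $a_i$ by a suitable power of $t\in\Lambda$ (which shifts $I_\omega$ by an integer multiple of $A_L$) so that the resulting valuations lie in a common fundamental domain for the $A_L\Z$-action, the strict inequalities force the $k+1$ classes $[\ell(\beta_i,H')]\in\R/A_L\Z$, for $i=0,\ldots,k$, to be pairwise distinct.

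The main obstacle is the passage from $H'$ back to $H$ when the original intersections are not isolated. Here I would use the $C^0$-continuity of Lagrangian spectral invariants combined with the discreteness of $A_L\Z\subset\R$: as $H'\to H$ in Hofer norm the $k+1$ spectral values converge, and since distinct cosets of a discrete subgroup remain distinct under small perturbations, the count of represented cosets is lower semi-continuous in $H$, so the bound $\geq k+1$ descends to $H$. The weakly-exact case $A_L=0$ is degenerate in this formulation but easier, as the spectral values are then automatically distinct as real numbers whenever the $I_\omega(a_i)$ are.
\end{pfs}
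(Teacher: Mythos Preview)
There are real gaps. First, $\beta_i:=a_i\bullet\beta\neq 0$ does not follow from $a_i\neq 0$ and $\beta\neq 0$; the $QH(M,\Lambda)$-module $QH(L)$ need not be torsion-free, and passing to a $\Lambda$-multiple of $a_i$ cannot repair a vanishing product. Second, even granting nonvanishing, your distinctness argument fails: the strict inequality $\ell(\beta_i,H')<\ell(\beta,H')+I_\omega(a_i)$ is only an \emph{upper bound} for $\ell(\beta_i,H')$, and distinct upper bounds say nothing about distinctness of the actual values, let alone of their cosets in $\R/A_L\Z$. Third, the limiting step is wrong: cosets in $\R/A_L\Z$ live on a circle, so distinct cosets for $H'$ can coalesce as $H'\to H$; there is no lower semi-continuity of the coset count.

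The argument the paper has in mind (the corollary is said to follow ``immediately'' from Theorem~\ref{thm:ll}) is shorter and avoids all three issues. Use \emph{iterated} products $\beta_0=\beta$, $\beta_j=a_j\bullet\beta_{j-1}$ (the hypothesis is to be read so that these are all nonzero, in parallel with the chain built in the proof of Theorem~\ref{thm:ArnoldC}). Then already the \emph{non-strict} first inequality of Theorem~\ref{thm:ll} gives
\[
\ell(\beta_j,H)\ \leq\ \ell(\beta_{j-1},H)+I_\omega(a_j)\ <\ \ell(\beta_{j-1},H)
\]
since $I_\omega(a_j)<0$, producing a strictly decreasing chain $\ell(\beta_k,H)<\cdots<\ell(\beta_0,H)$ of $k+1$ distinct spectral values for \emph{every} $H$ --- no isolated-intersection assumption, no perturbation, no limit. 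The two structural points you missed: iterate the module action rather than apply it in parallel to $\beta$, and observe that the hypothesis $I_\omega(a_i)<0$ already turns the non-strict inequality into a strict one without any transversality.
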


\subsubsection{A Chekanov-type result}


 Denote by  $$\cL(L)=\{\varphi(L)|\varphi\in\ham(M,\omega)\}$$  the orbit of $L$ under the Hamiltonian diffeomorphism group $\ham(M,\omega)$. 


Recall that~\cite{Le,KS} if $QH(L)\neq 0$, then the \textit{Lagrangian spectral pseudo-norm} of the pair $(L,H)$ with $H\in\cH$ is defined by $$\ga(L,H)=\ell(L,H)+\ell(L,\overbar{H}),$$ where $\overbar{H}(t,x)=-H(-t,x)$. 

For any $L'\in\cL(L)$ we define
$$\ga(L,L'):=\inf_{H\in\cH}\big\{\ga(L,H)\big|\varphi_H^1(L)=L'\big\}.$$
This pseudo-metric, whenever $L$ is closed and monotone with $N_L\geq 2$ and $QH(L)\neq0$, is non-degenerate and invariant under the action of $\ham(M,\omega)$, see~\cite{KS}. 

	The following \textbf{Chekanov-type result} on Lagrangian intersections partially verifies a more general format in~\cite[Section~8]{KS}. Denote by $d_H$  the Lagrangian Hofer distance on $\cL(L)$ (cf. \cite{Ho2,Ch3}).
It is well known that $\gamma \leq d_H$, see, e.g., \cite{KS}. In view of this inequality, our result strengthens Liu's \cite{Liu} in the monotone case.

\begin{thm}\label{thm: Arnol'dC}
Let $L^n\subset (M^{2n},\omega)$ be a monotone wide Lagrangian with $N_L\geq 2$. Suppose that  the singular homology $H(L,\Z_2)$ is generated as a ring by $H_{\geq n+1-N_L}(L,\Z_2)$ where the product is given by the intersection product. If $\varphi\in\ham(M,\omega)$ with $\ga(L,\varphi(L))<A_L$, then
$$\sharp \big(L\cap\varphi(L)\big)\geq cl(L).$$
\end{thm}

We remark here that under the assumption that $L$ and $\varphi(L)$ intersect transversely, a sharpened Chekanov-type result has been already established in~\cite[Theorem~E]{KS}.

It is a standard fact that the Clifford torus $$\T_{Clif}^n=\{[z_0:\cdots:z_n]\in\mathbb{C}P^n||z_0|=\cdots=|z_n|\}$$ is monotone with $N_{\T_{Clif}^n}=2$, see~\cite{Cho,BC}.
Let $t_1,\ldots,t_n$ be a basis of $H_{n-1}(\T_{Clif}^n,\Z_2)$ dual to the basis $[c_1],\ldots,[c_n]\in H_1(\T_{Clif}^n,\Z_2)$ with respect to the intersection product. Clearly, $H_*(\T_{Clif}^n,\Z_2)$ is generated by $t_1,\ldots,t_n$ and the fundamental class $[\T_{Clif}^n]$.
Therefore, by Theorem~\ref{thm: Arnol'dC}, any Hamiltonian diffeomorphism $\varphi$ of $\mathbb{C}P^n$ with
$\ga(\T_{Clif}^n,\varphi(\T_{Clif}^n))<A_{\T_{Clif}^n}$ satisfies
$$\sharp \big(\T_{Clif}^n\cap\varphi(\T_{Clif}^n)\big)\geq n+1.$$

The author expects that  $\ga(\T_{Clif}^n,\varphi(\T_{Clif}^n))<A_{\T_{Clif}^n}$ holds for any $\varphi\in\ham(\mathbb{C}P^n,\omega_{FS})$. In other words, Conjecture~\ref{conj:intersection} holds for $(\mathbb{C}P^n,\T_{Clif}^n)$,  but the author was unable to prove this.

It is shown in~\cite[Theorem~3.1]{Se} that $n+1$ is the maximal possible value of $N_L$ for a monotone Lagrangian $L$ in projective space $\mathbb{C}P^n$. The following examples of wide Lagrangians $L$ with $N_L>\dim L$ are studied by Biran and Cornea, see~\cite[Section~6]{BC}.
\begin{example}
If $L$ is a Lagrangian submanifold of $\mathbb{C}P^n$ with $2H_1(L,\Z)=0$, then $L$ is monotone and wide with $N_L=n+1$, see~\cite[Corollary~1.2.11]{BC}.
If $L$ is a Lagrangian submanifold of the quadric
$Q:=\{z_0^2+\cdots+z_n^2=z_{n+1}^2\}\subset\mathbb{C}P^n$ with $H_1(L;\Z)=0$, then $L$ is monotone and wide with $N_L=2n$, see~\cite[Lemma~6.3.2]{BC}. Therefore, by Theorem~\ref{thm: Arnol'dC}, for these two kinds of Lagrangians $L$, if $\ga(L,\varphi(L))<A_L$ then the number of intersections of $\varphi(L)$ with $L$ is at least the $\Z_2$-cup-length of $L$.
\end{example}

Other interesting examples which satisfy the conditions of Theorem~\ref{thm: Arnol'dC} include:
$$(M,L)=(\mathbb{C}P^n,\;\R P^n), \quad (\mathbb{C}P^n\times( \mathbb{C}P^n)^-,\;\Delta_{\mathbb{C}P^n}), \quad(Q^n,\;S^n),\quad(Gr(2,2n+2),\;\bH P^n),$$
where $\Delta_{\mathbb{C}P^n}$ is the diagonal in the product symplectic manifold $(\mathbb{C}P^n\times( \mathbb{C}P^n)^-,\omega_{FS}\oplus(-\omega_{FS}))$, $Q^n\subset \mathbb{C}P^{n+1}$ ($n>1$) is the complex quadric as described above, $S^n=Q^n\cap\R P^{n+1}$ is the natural monotone Lagrangian sphere in $Q^n$,
and $\bH P^n$ ($n\geq1$)  is the  quaternionic projective space in
the complex Grassmannian $Gr(2,2n+2)$. For $(\mathbb{C}P^n,\;\R P^n)$, it has been known that $\ga(\R P^n,\varphi(\R P^n))\leq \langle [\omega_{FS}], \C P^n \rangle$, see~\cite{EP0,KS}.
Moreover, the above four examples satisfy $\ga(L,\varphi(L))<A_L$ for any $\varphi\in\ham(M,\omega)$. This is a non-trivial fact which
was discovered by Kislev and Shelukhin by using an averaging method, see~\cite[Theorem~G]{KS} for more precise estimates of various Lagrangian spectral norms. Now we see that this phenomenon is closely related to Conjecture~\ref{conj:intersection}. 
By Theorem~\ref{thm: Arnol'dC}, we have $\sharp(L\cap \varphi(L))\geq cl(L)$ in these four cases, see Theorem~\ref{cor:RPn}. For $L=\R P^n, \Delta_{\mathbb{C}P^n}, \bH P^n$, it is known that $cl(L)=n+1$ (cf.~\cite[Section~3.2]{Hat}).

 \subsubsection{Uniform lower bounds for Lagrangian intersections }
We introduce the definitions of (Lagrangian) fundamental quantum factorizations to give some uniform lower bounds of the number of Lagrangian intersections for some typical examples. Putting forward this notion is inspired by the quantum cup-length proposed by Schwarz~\cite{Sc2}. In this subsection we assume that $L$ is a closed monotone Lagrangian with $N_L\geq 2$ in a closed symplectic manifold $(M,\omega)$. 
 \begin{df}\label{def:fqf}
We say that $M$ has a \textit{ fundamental quantum factorization} (FQF)  \textit{of length $k$} if there exist $u_1,\ldots,u_k\in H_{*<2n}(M,\Z_2)$ and $\tau\in\Z$ such that
$$t^\tau[M]= u_1*u_2*\cdots *u_k\quad \hbox{in}\; QH(M,\Lambda),$$
where the integer $\tau$ is called
the \textit{order} of FQF.  Clearly, $\tau> 0$ by degree reasons.
\end{df}

The following  symplectic manifolds satisfy the FQF property;  for relevant calculations of the quantum homology we refer to McDuff and Salamon~\cite{MS}.

\begin{example}
	(1) The complex projective spaces $\mathbb{C}P^n$ and complex Grassmannians; (2) The quadric $Q\subset\mathbb{C}P^n$; (3) $\mathbb{C}P^{n}\times\mathbb{C}P^{m_1}\times\cdots\times\mathbb{C}P^{m_r}$ with $m_1+1,\ldots,m_r+1$ divisible by $n+1$ and equally normalized symplectic structures, see, e.g., ~\cite{GG,BC}.  
	
\end{example}

\begin{thm}\label{thm:two} Let $L^n\subset (M^{2n},\omega)$ be a monotone wide Lagrangian with $N_L\geq 2$.
Suppose that $M$ has a FQF of length $k$ with order $\tau$. Let  $\varphi\in \cH am(M,\omega)$.  If the intersections of $L$ and $\varphi(L)$ are isolated, then the number of $L\cap\varphi(L)$ is at least $\lceil k/\tau\rceil$. Here $\lceil\cdot\rceil$ denotes the smallest integer that is greater or equal to the given number.
\end{thm}

\begin{rmk}
	Note that if $W^{2n}$ and $L_0$ satisfy the hypotheses of Theorem~\ref{thm:two}, and $P^{2l}$ is symplectically aspherical and contains a weakly exact Lagrangian $L_1$, then $W\times P$ and $L=L_0\times L_1$ also satisfy these hypotheses. Indeed,  the direct sum property of Malov index~\cite[Theorem~2.3.7]{MS0} implies that $N_L=N_{L_0}$.  Moreover, if $u_1,\ldots,u_k\in H_{*<2n}(W,\Z_2)$ satisfy the equality in Definition~\ref{def:fqf} with order $\tau$, then by the K\"{u}nneth formula of quantum homology \cite[Exercise~11.1.15]{MS}, $u_i\otimes[P] \in H_{*<2n+2l}(W\times P,\Z_2)$, $i=1\ldots k$, also satisfy this equality with order $\tau$. 
	\end{rmk}	

\begin{example}\label{ex:cp}
Let $L\subset \mathbb{C}P^n$ be a closed and monotone Lagrangian submanifold with minimal Maslov index $N_L$. We
denote by $h=[\mathbb{C}P^{n-1}]\in H_{2n-2}(\mathbb{C}P^n,\Z_2)$ the class of a hyperplane in the quantum homology $QH(\mathbb{C}P^n,\Lambda)$. It is shown in~\cite[Example~11.1.10]{MS} that
\[
h^{*k}=
\begin{cases}h^{\cap k},\quad  \ \ & 0\leq k\leq n,\\
[\mathbb{C}P^n]t^{(2n+2)/N_L}, \quad \ \ & k=n+1.
\end{cases}
\]
If $2H_1(L,\Z)=0$, then for such Lagrangian $L$, $\mathbb{C}P^n$   has a FQF of length $n+1$ with order $2$. By Theorem~\ref{thm:two}, for any $\varphi\in\ham(\mathbb{C}P^n,\omega_{FS})$  the number of $L\cap\varphi(L)$ is at least $\lceil\frac{n+1}{2}\rceil$.
\end{example}

\begin{example}
Let
$Q\subset\mathbb{C}P^{n+1}$ be the quadric as before.  Let $L\subset Q$ be a monotone Lagrangian with $H_1(L;\Z)=0$.
For $n=2k$, let $a,b\in H_{2k}(Q;\Z)$ be two classes of complex $k$-dimensional planes lying in $Q$ that generate $H_n(Q,\Z)$, see~\cite{GH}.
Let $u\in H_{2n}(Q;\Z)$ be the fundamental class and $p\in H_0(Q,\Z)$ the class of a point.
 The quantum product on $QH(Q,\Lambda)$ satisfies: (i) if $k$ is odd, then $a*b=p$, $a*a=b*b=ut$; (ii) if $k$ is even, then $a*a=b*b=p$, $a*b=ut$, see~\cite[Section 6.3.1]{BC}. These computations show that for such $L$, the quadric $Q$ has a FQF of length $2$ with order $1$. Therefore, for any $\varphi\in\ham(Q,\omega)$  the number of points in $L\cap\varphi(L)$ is at least $2$.
\end{example}


Here we notice that a Lagrangian $L\subset \mathbb{C}P^n$ with $2H_1(L;\Z)=0$ has minimal Maslov number $n+1$ and is shown to be homotopy equivalent to $\R P^n$, see~\cite{KoS}. It was conjectured by Biran and Cornea~\cite{BC} that such a Lagrangian $L$ must be Hamiltonian isotopic to $\R P^n$. If the conjecture is true, then for $(\mathbb{C}P^n,L)$ the uniform lower bound given by Theorem~\ref{cor:RPn} is obviously better than the one by Theorem~\ref{thm:two}.
We also note that for the natural monotone Lagrangian sphere $L=S^{2k}\subset Q^{2k}$, the uniform lower bounds given by Theorem~\ref{thm:two} and Theorem~\ref{cor:RPn} are the same since $cl(S^{2k})=2$.

Similarly to Definition~\ref{def:fqf}, using the Lagrangian quantum product on $QH(L)$, one can propose the following concept.

\begin{df}
		Let $L^n\subset (M^{2n},\omega)$ be a monotone wide Lagrangian with $N_L\geq 2$. We say that $L$ has a \textit{Lagrangian fundamental quantum factorization} (LFQF)  \textit{of length $l$} if there exist $v_1,\ldots,v_l\in H_{n-N_L<*<n}(L,\Z_2)$ and $\nu\in\Z$ such that
		$$t^\nu[L]=  v_1\circ v_2\circ \cdots \circ v_l\quad \hbox{in}\; QH(L),$$
		where $\nu$ is called
		the \textit{order} of LFQF.  By degree reasons  again we have $\nu>0$.
\end{df}
Correspondingly, one can prove the following.
\begin{thm}\label{thm:more}
	Let $L^n\subset (M^{2n},\omega)$ be a monotone wide Lagrangian with $N_L\geq 2$, and let  $\varphi\in \ham(M,\omega)$. If $L$ has a LFQF of length $l$ with order $\nu$, then the number of $L\cap\varphi(L)$ is at least $\lceil l/\nu \rceil$.
\end{thm}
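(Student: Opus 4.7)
The plan is to adapt the Ljusternik--Schnirelman counting argument underlying Theorem~\ref{thm:two} to the Lagrangian quantum product $\circ$ on $QH(L)$, using the strict form of Theorem~\ref{thm:lls} in place of Theorem~\ref{thm:ll}.

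First I would dispose of the non-isolated case: if $L\cap\varphi(L)$ is not isolated, then compactness of $L$ forces this intersection to be infinite and the claim is trivial. Assume from now on that $L\cap\varphi(L)$ is finite and pick an $H\in\cH_c$ generating $\varphi$. Using the LFQF decomposition
$$t^\nu[L]=v_1\circ v_2\circ\cdots\circ v_l,\qquad v_j\in H_{n-N_L<*<n}(L,\Z_2),$$
form the partial products $\beta_{l+1}:=[L]$ and $\beta_j:=v_j\circ\beta_{j+1}$, so that $\beta_1=t^\nu[L]$. Each $\beta_j$ is nonzero because $\beta_1\neq 0$. The degree condition $n-N_L<\deg v_j<n$ combined with the canonical embedding $H_p(L,\Z_2)\hookrightarrow QH_p(L)$ valid for $p\geq n+1-N_L$ on a wide Lagrangian guarantees that each $v_j$ defines a nonzero class in $\widehat{QH}_*(L)$ of $\Lambda$--valuation $\nu(v_j)=0$.

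Applying the strict inequality of Theorem~\ref{thm:lls} to the pair $(\alpha,\beta)=(v_j,\beta_{j+1})$ at every $j=1,\ldots,l$ yields
$$\ell(\beta_j,H)<\ell(\beta_{j+1},H)+A_L\,\nu(v_j)=\ell(\beta_{j+1},H),$$
so $c_j:=\ell(\beta_j,H)$ is a strictly increasing sequence of $l+1$ Lagrangian spectral values. The shift relation $\ell(t^k\alpha,H)=\ell(\alpha,H)-kA_L$, which one extracts from the non-strict form of Theorem~\ref{thm:lls} applied with $\alpha=t^{\pm k}[L]$, then gives $c_{l+1}-c_1=\nu A_L$, placing the whole sequence inside a closed interval of length exactly $\nu A_L$.

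The concluding step is a pigeonhole in the action spectrum. Set $N:=\sharp\bigl(L\cap\varphi(L)\bigr)$. The spectrum $\mathrm{Spec}(L,H)$ is a union of $N$ arithmetic progressions $a_i+A_L\Z$, one per intersection point, and any half-open real interval of length $\nu A_L$ meets each such progression in exactly $\nu$ points, so it contains at most $N\nu$ spectral values. Since the $l$ distinct values $c_1,\ldots,c_l$ all lie in the half-open interval $[c_1,c_1+\nu A_L)=[c_1,c_{l+1})$, we conclude that $l\leq N\nu$, i.e.\ $N\geq\lceil l/\nu\rceil$. The principal technical point to make watertight is that each $v_j$ really does stay a nonzero class in the reduced quantum homology $\widehat{QH}_*(L)$, so that the strict rather than weak form of Theorem~\ref{thm:lls} is applicable at every step of the iteration; this is precisely what the degree window $n-N_L<*<n$ in the LFQF definition is engineered to enforce.
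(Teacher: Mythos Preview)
Your argument is correct and is precisely the adaptation of the proof of Theorem~\ref{thm:two} that the paper intends (the paper only says ``Correspondingly, one can prove the following'' and gives no separate proof): you replace the module action $\bullet$ and Theorem~\ref{thm:ll} by the Lagrangian quantum product $\circ$ and the strict inequality of Theorem~\ref{thm:lls}, and your pigeonhole via the half-open interval $[c_1,c_{l+1})$ is equivalent to the paper's equivalence-class count in the proof of Theorem~\ref{thm:two}. The only cosmetic difference is that the quantum shift $\ell(t^k\alpha,H)=\ell(\alpha,H)-kA_L$ is stated directly in the paper as property~(LS3), so you need not rederive it from Theorem~\ref{thm:lls}.
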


It is well known that the only Lagrangian submanifold on the sphere $S^2$ which is monotone is the ``equator", by which we mean the embedding circle separating the sphere into two disks of equal areas. It is a standard fact that such an equator $L$ is \emph{non-displaceable} in the sense that for every Hamiltonian diffeomorphism $\varphi$ of $S^2$ we have $L\cap\varphi(L)\neq 0$. Moreover, we have $\sharp (L\cap\varphi(L))\geq 2$. This obvious fact can be confirmed in many ways, for instance, by using Theorem~\ref{cor:RPn}. Here we provide another proof from the Lagrangian Ljusternik-Schnirelman theory.

In fact, for $L=\text{equator} \subset \mathbb{C}P^1=S^2$, an easy calculation shows that $[pt]\circ [pt]=[L]t$, where $[pt]$ is the point class for $L$, and $\deg t=-2$. So $L$ has a LFQF of length $2$ with order $1$, and thus Theorem~\ref{thm:more} implies that $L\cap\varphi(L)$ has at least two elements whatever $\varphi\in\ham(S^2)$ is.

More generally, let $t_1,\ldots,t_n$ be a basis of $H_{n-1}(\T_{Clif}^n,\Z_2)$ dual to the basis $[c_1],\ldots,[c_n]\in H_1(\T_{Clif}^n,\Z_2)$ as before. It can be shown that for $i\neq j$, $t_i\circ t_j+t_j\circ t_i=[\T_{Clif}^n]t$, and for every $i$, $t_i\circ t_i=[\T_{Clif}^n]t$, see~\cite{BC,Cho2}. So $\T_{Clif}^n$ has LFQF of length $2$ with order $1$, and thus Theorem~\ref{thm:2pt} holds true.

\begin{example}
Let $M=\mathbb{C}P^1\times \mathbb{C}P^1\cong S^2\times S^2$ with the monotone symplectic form $\omega_{FS}\times\omega_{FS}$, and let $L=\R P^1\times\R P^1\cong \T^2$ be the Clifford torus  in $S^2\times S^2$ (which is monotone with $N_L=2$). We set $a=[\R P^1\times pt], b=[pt\times \R P^1]\in H_1(L,\Z_2)$. It can be shown that $a\circ b=b\circ a=[pt]$ and $a\circ a=b\circ b=[L]t$, see~\cite[Example~1.2]{Ha}. So $\R P^1\times\R P^1$ has a LFQF of length $2$ with order $1$, and hence $\sharp(\R P^1\times\R P^1\cap \varphi(\R P^1\times\R P^1))\geq 2$ for all $\varphi\in\ham(\mathbb{C}P^1\times \mathbb{C}P^1,\omega_{FS}\times\omega_{FS})$.

\end{example}

\subsection{Organization of the paper} In Section~\ref{sec:pre} we sum up preliminaries from Floer theory including Lagrangian and Hamiltonian Floer homologies, and from quantum homology including Lagrangian quantum homology of a Lagrangian submanifold and quantum homology of the ambient manifold. In Section~\ref{sec:spectinv} we list the basic properties of Hamiltonian and Lagrangian spectral invariants. In particular, their relations with the classical Ljusternik--Schnirelman theory are given.  In Section~\ref{sec:mainthms} we prove our main results including Theorems~\ref{thm:lls} and \ref{thm:ll}. In Section~\ref{sec:last} we prove Theorems~\ref{thm: Arnol'dC}--\ref{thm:more}. 

\section*{Acknowledgements}
I am deeply indebted to Weiwei Wu and Jun Zhang for helpful discussions when preparing this paper. Many ideas involved in this work stem from the excellent work of Hofer and Zehnder~\cite{HZ}, Schwarz~\cite{Sc,Sc2} and Ginzburg and G\"{u}rel~\cite{GG}. I benefited a lot from the fascinating papers by Biran and Cornea~\cite{BC,BC2,BC3}, Leclercq and Zapolsky~\cite{LZ} and Kislev and Shelukhin~\cite{KS}. Without their pioneering works, the present paper would be impossible to finish. I thank all of them. I am  grateful to Professor Octav Cornea for valuable remarks and helpful suggestions. I thank Professor Lev Buhovsky for explaining to me the main result in~\cite{BHS1}. I thank Professor Jinxin Xue and my colleague Hong Huang for many useful discussions. I wish to thank Professor Guangcun Lu for helpful remarks and for pointing out to me the work of Givental~\cite{Gi}. Finally, I would like to express my deep gratitude to the anonymous referee for a very careful reading and many valuable suggestions which in practice largely help to improve the presentation of this article, and in particular bring about the appendices. The author is partially supported by NSFC 11701313 and NSFC 12271285, and the Fundamental Research Funds for the Central Universities 2018NTST18 at Beijing Normal University.


\section{Preliminaries}\label{sec:pre}

\subsection{Floer homology}
In this section we recall the construction of  Lagrangian  and Hamiltonian Floer homology.

\subsubsection{Lagrangian Floer homology}\label{subsec:lfh}
Given a Hamiltonian $H\in\cH$ and a Lagrangian $L\subset M$ we consider the space of  contractible chords relative to $L$
$$\cP_L=\big\{x:[0,1]\to M|x(0), x(1)\in L\; \hbox{and } [x]=0\in\pi_1(M,L)\big\}.$$
For every $x\in\cP_L$, there is a capping $\overbar{x}:\bD\cap\bH\to M$ such that $\overbar{x}|_{\bD\cap\R}=x$ and $\overbar{x}|_{\partial\bD\cap\{\im(z)\geq 0\}}\subset L$, where $\bD=\{z\in\C:|z|\leq 1\}$ and $\bH=\{z\in\C:\im(z)\geq 0\}$. Two cappings $\overbar{x},\overbar{x}'$ are said to be equivalent if the glued map $v=\overbar{x}\sharp\overbar{x}':(\bD,\partial\bD)\to (M,L)$, defined by $\overbar{x}(z)$ for $z\in\bD\cap\bH$
and $\overbar{x}'(\overline{z})$ for $z\in\bD\cap\overbar{\bH}$, satisfies
$\omega(v)=\mu_L(v)=0$. Here $\overbar{\bH}=\{z\in\C:\im(z)\leq 0\}$.  Denote by $\widetilde{\cP}_L$ the cover of $\cP_L$ consisting of all equivalence classes $[x,\overbar{x}]$ of the pairs $(x,\overbar{x})$.
The \emph{action} of $[x,\overbar{x}]$ is given by
$$\ahl([x,\ox])=\int^1_0H(t,x(t))-\int_{\ox}\omega.$$

We denote by  $\spec(H,L)$ the set of critical values of $\ahl$ on $\widetilde{\cP}_L$. It is well known that for a monotone Lagrangian $L$, the set $\spec(H,L)$ is a closed nowhere dense subset of $\R$, see for instance~\cite[Lemma~30]{LZ}. 

Suppose now that $(H,L)$ is non-degenerate, i.e.,   $\varphi^1_H(L)$ intersects $L$ transversely. 
We denote by $\cO(L,H)$ the set of all Hamiltonian chords of $H$ from $L$ to $L$ representing the zero class in $\pi_1(M,L)$.  It is easy to see that its cover $\widetilde{\cO}(L,H)$ consists of the critical points of $\ahl$ on $\widetilde{\cP}_L$. 

Take a base point $\eta_0=[x_0,\overbar{x}_0]\in \widetilde{\cP}_L$ and define the \emph{index} of each element $\widetilde{x}=[x,\overbar{x}]\in \widetilde{\cO}(L,H)$ by $|\widetilde{x}|=\mu_{V}(\widetilde{x},\eta_0)$ where $\mu_V$ is the Viterbo-Maslov index.  For a detailed construction of this index we refer to~\cite{Vi3}. Here we remark that $\mu_V$ is a relative Maslov index which depends on the choice of a smooth map
$u:[0,1]^2\to M$ satisfying $u(0,t)=x_0(t), u(1,t)=x(t)$ and $u(s,0),u(s,1)\in L$. A different choice of the base point in  $\widetilde{\cP}_L$ gives rise to a shift of degrees. To normalize the index we require that if $H$ is a lift of a Morse function $f$ on $L$ with $\|\cdot\|_{C^2}$-norm small enough to a Weinstein neighborhood of $L$, then for a constant path and the constant capping $\overbar{q}$ at a critical point $q$ of $f$, we have $\mu([q,\overbar{q}])=\ind_f(q)$. 

For each $k\in\Z$, we consider the $\Z_2$-vector space 
 $CF_k(L,H;\Z_2)$  freely generated by elements $\widetilde{x}\in\widetilde{\cO}(L,H)$ with grading $|\widetilde{x}|=k$. 
 Note that for each $A\in\pi_2(M,L)$ and each $[x,\overbar{x}]\in\widetilde{\cO}(L,H)$,  $|[x,\overbar{x}\sharp A]|=|[x,\overbar{x}]|-\mu(A)$.  
 In the monotone case we see that $CF_k(L,H;\Z_2)$ is a finite dimension vector space by degree reasons. We define
 $$CF_*(L,H;\Z_2):=\bigoplus_{k\in\Z}CF_k(L,H;\Z_2)$$ 
 which is a $\Z$-graded vector space over $\Z_2$. 

Fix a family of almost complex structures $\{J_t\}_{t\in[0,1]}$ so that $J_t\in\cJ$ for each $t\in[0,1]$. 
For every $\widetilde{x}=[x,\overbar{x}]\in \widetilde{\cO}(L,H)$, we define the differential
$$d_{H,J}\widetilde{x}=\sum\sharp_2\cM(\widetilde{x},\widetilde{y})\widetilde{y},$$
where $\cM(\widetilde{x},\widetilde{y})$ is the moduli space of solutions $u:\R\times[0,1]\to M$ of the Floer  equation
$$\partial_su+J\partial_tu+\nabla H_t(u)=0$$
with the boundary condition $u(\R\times\{0,1\})\subset L$, and $\sharp_2\cM(\widetilde{x},\widetilde{y})$ denotes the number of its elements modulo $2$. Here the sum is taken over all $\widetilde{y}=[y,\overbar{y}]$ satisfying $\overline{y}=\overline{x}\sharp u$ and  $\mu(\widetilde{x})-\mu(\widetilde{y})=1$. 
By a standard Gromov-Floer compactness argument, one gets $d_{H,J}\circ d_{H,J}=0$.

Denote by $CF_*(L;H)$ the Floer complex consists of elements having the formal sum $$\sum_{[x,\ox]\in \widetilde{\cO}(L,H)} a_{[x,\ox]}[x,\ox],\quad a_{[x,\ox]}\in\Z_2$$ with the property that for any $a\in\R$,
\[\sharp\big\{[x,\ox]\in \widetilde{\cO}(L,H)\big|\;a_{[x,\ox]}\neq 0,\;\ahl([x,\ox])\geq a\big\}<\infty.\]
The Novikov field $\Lambda$ acts on $CF_*(L;H)$ by $$t^{\bar{\mu}(A)}\cdot [x,\overbar{x}]=[x,\overbar{x}\sharp A].$$
On the Novikov ring and the Floer complex as a $\Lambda$-module, we refer to~\cite{HS} for a detailed discussion. It is easy to verify that
\[CF_*(L;H)=CF_*(L,H;\Z_2)\otimes_{\Z_2}\Lambda/\sim\]
where $\sim$ is given by $[x,\overbar{x}]\otimes t^{\bar{\mu}(A)}\sim[x,\overbar{x}\sharp A]$.

Extending $d_{H,J}$ by linearity over $\Lambda$, one obtains a  finite rank free complex $(CF_*(L;H),d_{H,J})$ over $\Lambda$.  The \textit{Lagrangian Floer homology} is defined to be the homology of this complex, and is denoted by $HF_*(L;H,J)$. It can be shown that this homology is  independent of the choice of a family of almost complex structures, and invariant under Hamiltonian perturbations.
In particular, there exists a canonical isomorphism $HF_*(L;H,J)\cong HF_*(L;K,J')$ for any two pairs $(H,J)$ and $(K,J')$ such that the corresponding  homologies are well-defined.

The complex $CF_*(L;H)$ is filtered by the action $\ahl$ as follows:
$$\ahl\big(\sum [x_k,\ox_k]\otimes\lambda_k\big)=\max\limits_k\big\{\ahl([x_k,\ox_k])+A_L\nu(\lambda_k)\big\}.$$

For $a\in\R\setminus\spec(H,L)$, we define
$$CF_*^a(L;H):=\big\{\beta\in CF_*(L;H)\big|\ahl(\beta)<a\big\}.$$
One can show that $CF_*^a(L;H)$ is a subcomplex of  $CF_*(L;H)$. The homology of this subcomplex is denoted by $HF_*^a(L;H,J)$.

\subsubsection{Hamiltonian Floer homology}\label{sec:hfh}
Let $H\in\cH$, and let $\cO(H)=\{\overbar{\ga}=(\ga,\widehat{\ga})\}/\sim$.
Here each $\ga$ is a contractible $1$-periodic orbit of the Hamiltonian flow of $H$, $\widehat{\ga}:\bD\to M$ is a capping of $\ga$ (i.e., $\widehat{\ga}|_{\partial D}=\ga$), and the equivalence relation $\sim$ satisfies: $\overbar{\ga}\sim\overbar{\ga}'$ if $\ga=\ga'$ and $\omega(\widehat{\ga})=\omega(\widehat{\ga}')$. The action functional on $\cO(H)$ is defined by
$$\cA_H(\overbar{\ga})=\int_{S^1}H(t,\ga(t))dt-\int_D\widehat{\ga}^*\omega.$$

For a generic pair $(H,J)$ of a Hamiltonian $H$ and an almost complex structure $J$, the Floer complex $CF_*(H,J;\Gamma)$ is defined to be the set of elements having the formal sum $$\sum_{[\gamma,\widehat{\gamma}]\in \cO(H)} a_{[\gamma,\widehat{\gamma}]}[\gamma,\widehat{\gamma}],\quad a_{[\gamma,\widehat{\gamma}]}\in\Z_2$$ with the property that for any $a\in\R$,
\[\sharp\big\{[\gamma,\widehat{\gamma}]\in \cO(H)\big|\;a_{[\gamma,\widehat{\gamma}]}\neq 0,\;\cA_H([\gamma,\widehat{\gamma}])\geq a\big\}<\infty.\]


The Novikov ring $\Gamma$ (recall that $\Gamma=\Z_2[s^{-1},s]]$, see~Section~\ref{subsec:notation})  acts on $CF_*(H,J;\Gamma)$
 by $s\cdot(\ga,\widehat{\ga})=(\ga,\widehat{\ga}\sharp A)$ with $\omega(\widehat{\ga}\sharp A)=\omega(\widehat{\ga})+2\kappa_LC_M$. We now extend the functional $\cA_H$ to the complex $CF(H,J;\Lambda)=CF_*(H,J;\Gamma)\tens_\Gamma\Lambda$
by $$\cA_H(\overbar{\ga}\tens t^k)=\cA_H(\overbar{\ga})-kA_L.$$ 

Since $\cA_H(s\cdot\overbar{\ga}\tens t^k)=\cA_H(\overbar{\ga}\tens  t^{2C_M/N_L}\cdot t^k$), the above extension is well-defined. 
The resulting homology is independent of the choice of $J$ by the standard continuation map, and hence we denote it by $HF_*(H,\Lambda)$. Given $\nu\in\R$, we denote by $CF^\nu_*(H,J;\Lambda)$ the subcomplex of the Floer complex generated by all the elements $\overbar{\ga}\tens\lambda$ of actions at most $\nu$, and $HF^\nu_*(H,\Lambda)$ the corresponding homology.

\subsection{ Quantum homology}\label{sec:qh}
The quantum homology $QH(M)=H(M,\Z_2)\tens_{\Z_2}\Gamma$ of a closed monotone manifold $(M,\omega)$ is a module over the ring $\Gamma=\Z_2[s^{-1},s]]$. Using the degree preserving embedding of rings $\Gamma\hookrightarrow\Lambda$ given by $s\to t^{2C_M/N_L}$, we define the obvious extension of the quantum homology:
$$QH(M,\Lambda)=H(M,\Z_2)\tens_{\Z_2}\Lambda=QH(M)\tens_\Gamma\Lambda.$$

Now we extend the valuation map $\nu$ (see~(\ref{e:val})) from $\Lambda$ to $QH(M,\Lambda)$ as
\begin{equation}\label{e:val0}
I_\omega(a)=A_L\max\big\{\nu(\lambda_k)|a_k\neq 0\big\}
\end{equation}
for $a=\sum_{k}a_k\tens\lambda_k\in QH(M,\Lambda)$ with $a_k\in H(M,\Z_2)$ and $\lambda_k\in\Lambda$.

We endow $QH(M,\Lambda)$ with the quantum intersection product
$$*:QH_i(M,\Lambda)\tens QH_j(M,\Lambda)\longrightarrow QH_{i+j-2n}(M,\Lambda),$$
see McDuff and Salamon~ \cite{MS} for the definition. This homology is an associative ring with unit $[M]\in QH_{2n}(M,\Lambda)$. Clearly, the quantum product has degree $-2n$. Furthermore, using a Morse-theoretical approach to quantum homology one can define a ring isomorphism
$${\rm PSS}:QM(M,\Lambda)\longrightarrow HF(H,\Lambda)$$
which is induced by the Piunikin-Salamon-Schwarz homomorphism
~\cite{PSS,Lu2} $$\widetilde{{\rm PSS}}:C(f,g;\Lambda):=\Z_2\brat{\crit (f)}\tens_{\Z_2} \Lambda\to CF(H,J;\Lambda),$$ where the pair $(f,g)$ is Morse-Smale with respect to the Morse function $f:M\to\R$ and the Riemannian metric $g$ on $M$, and the pair $(H,J)$ is generic so that the Floer complex $CF_*(H,J;\Lambda)$ is well-defined.

\subsection{Lagrangian quantum homology}\label{sec:lqh}
Recall that, by~\cite{BC,BC2,BC3}, if $L$ is a closed monotone Lagrangian with $N_L\geq 2$ then the Lagrangian quantum homology $QH(L)$ is well-defined. We briefly recall the construction as follows.  Let $L$ be a Morse function on $L$ and let $\rho$ be a Riemannian metric on $L$ so that the pair $(f,\rho)$ is Morse-Smale. For an $\omega$-compatible almost complex structure $J$  we define $$C(L;f,\rho,J):=\Z_2\brat{\crit (f)}\tens_{\Z_2} \Lambda$$
as the complex generated by critical points of $f$, and  graded by the Morse indices of these critical points and the grading of $\Lambda$.
In the following $W^u(x)$ denotes the unstable submanifold at $x$ of the negative gradient flow of $f$ in $L$. 
For two points $x,y\in\crit(f)$ and a class $A\in\pi_2(M,L)$, we consider the space of sequences $(u_1,\ldots, u_l)$ of possible length $l\geq 1$ satisfying
\begin{itemize}
	\item $u_i:(\bD,\partial \bD)\to (M,L)$ is a non-constant $J$-holomorphic disk, (recall that $\bD$ denotes the closed unit disk in $\C$).
	\item $u_1(-1)\in W^u(x)$. 
	\item For every $i\in\{1,\ldots,l-1\}$, $u_{i+1}(-1)\in W^u(u_i(1))$.
	\item $y\in W^u(u_l(1))$.
	\item $[u_1]+\ldots +[u_l]=A$.
\end{itemize}

Two sequences $(u_1,\ldots,u_l)$ and $(u'_1,\ldots,u'_{l'})$ are said to be equivalent if $l=l'$ and for every $1\leq i\leq l$ there exists $\tau_i\in\aut (\bD)$ such that $\tau_i(-1)=-1, \tau_i(1)=1$ and $u_i=u_i'\circ \tau_i$. Let $\M_{prl}(x,y;A;f,\rho, J)$ be the quotient space with respect to this equivalence relation. Elements of this space are called \textit{pearly trajectories connecting $x$ to $y$}. A typical pearly trajectory is illustrated in Figure~\ref{fig:pear}.

\begin{figure}[H]
  \centering
  \includegraphics[scale=0.7]{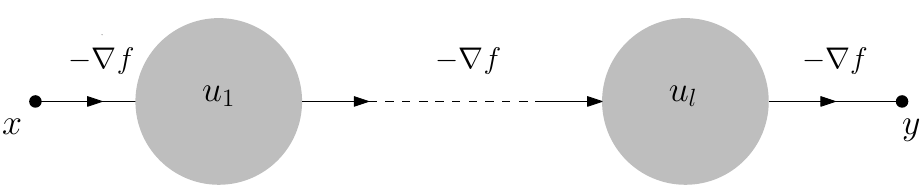}
  \caption{Pearly trajectories connecting $x$ to $y$}\label{fig:pear}
\end{figure}


We extend the definition of the space of pearly trajectories to the case that $A=0$ by defining $\M_{prl}(x,y;0;f,\rho, J)$ to be the space of unparametrized trajectories of the negative gradient flow of $\nabla f$ connecting $x$ to $y$. The virtual dimension of $\M_{prl}(x,y;A;f,\rho, J)$ is given by
$$\virdim\M_{prl}(x,y;A;f,\rho, J)=\ind_f(x)-\ind_f(y)+\mu(A)-1.$$
Here $\ind_f(x)$ denotes the Morse index of the critical point $x$ of $f$.

Suppose that $\virdim\M_{prl}(x,y;A;f,\rho, J)=0$.
For a generic $J\in\cJ$, the moduli space $\M_{prl}(x,y;A;f,\rho, J)$ is a compact $0$-dimensional manifold. Put
$$d(x)=\sum\limits_{y,A}\sharp_2\M_{prl}(x,y;A;f,\rho, J)yt^{\bar{\mu}(A)}$$
and extend $d$ to $C(L;f,\rho,J)$ by linearity over $\Lambda$. The compactness and gluing properties of the moduli spaces of virtual dimension equal to $1$ lead to $d^2=0$. We call the homology of $(C(L;f,\rho,J),d)$ the \textit{Lagrangian quantum homology}. Different choices of datum $\cD=(f,\rho,J)$ give rise to isomorphic resulting homologies by continuation isomorphisms. We denote by $QH(L)$ the abstract Lagrangian quantum homology of $L$ (i.e., the limit of the corresponding direct system of graded modules), and  $QH(L;\cD)$ the homology of  $(C(L;f,\rho,J),d)$ for a specific choice of  datum $\cD=(f,\rho,J)$.


We define on the chain complex $(C(L;f,\rho,J),d)$ a map $$\epsilon_L:C(L;f,\rho,J)\longrightarrow\Lambda$$ given by $\epsilon_L(x)=1$ for all $x\in\crit_0(f)$ and $\epsilon_L(x)=0$ for all critical points of $f$ with strict positive index. This is a chain map (cf. page 2917 in~\cite{BC}) since the condition that $N_L\geq 2$ implies that the differential strictly increases the Morse index of each critical point  of index $0$, and since each critical point of index $1$ has two  flow lines emanating from it to critical points of index $0$. The induced map $\epsilon_L$ on $QH(L;\cD)$ is called the \textit{augmentation}.

Now we extend the valuation map $\nu$ (see~(\ref{e:val})) from $\Lambda$ to $C(L;\cD)$ as

$$\nu(x)=\max\big\{\nu(\lambda_k)|\lambda_k\neq 0\big\}$$
for $x=\sum_{k}x_k\otimes\lambda_k\in C(L;\cD)$, where $x_k$ are the critical points of $f$. Define the valuation on $QH(L;\cD)$ by setting
\begin{equation}\label{e:val1}
\nu(\al)=\inf \big\{\nu(x)|[x]=\al\big\}
\end{equation}
and $\nu(0)=-\infty$. A similar valuation was introduced in the work of Entov and Polterovich~\cite{EP}.

\subsubsection{ Lagrangian quantum structures}\label{subsec:lqs}
Following ~\cite{BC,BC2,BC3}, 
we will give a rapid review of two algebraic  structures  of Lagrangian quantum homology which will be useful in this paper.

It is shown that the homology $QH(L)$ carries a supercommutative associative product
$$\circ: QH_i(L)\tens QH_j(L)\longrightarrow QH_{i+j-n}(L),\quad \al\tens \beta\longmapsto \al\circ \beta$$
for every $i,j\in \Z$ with a unit $[L]\in QH_n(L)$.

Also, $QH(L)$ has the structure of a module over the quantum homology  $QH(M,\Lambda)$.  Specifically, for every $i,j\in\Z$, there exists a $\Lambda$-bilinear map
$$\bullet:QH_i(M,\Lambda)\tens QH_j(L)\longrightarrow QH_{i+j-2n}(L),\quad a\tens\al\longmapsto a\bullet\al.$$

These operations endow $QH(L)$ with the structure of a two-sided algebra over the ring $QH(M,\Lambda)$. This means that for any $a\in QH(M,\Lambda)$ and any $\al,\beta\in QH(L)$, we have $$a\bullet(\al\circ \beta)=(a\bullet\al)\circ \beta=\al\circ(a\bullet\beta).$$


\subsubsection{Canonical embeddings}\label{subsec:emb}
Although
by the PSS isomorphism (cf. Section~\ref{sec:pss}) we always have $HF(L)\cong QH(L)$,
in general, for a monotone wide Lagrangian $L$, there is no canonical isomorphism $QH_*(L)\cong (H(L,\Z_2)\tens\Lambda)_*$, see~\cite[Section~4.5]{BC}.
However, we notice that for every $p\geq n+1-N_L$, there exists a canonical embedding $i:H_p(L,\Z_2)\tens\Lambda_*\hookrightarrow QH_{p+*}(L)$ for wide Lagrangians $L$, see~\cite[Proposition~4.5.1]{BC}, meaning that for every $p\geq n-N_L+1$, $x$ belonging to the Morse complex $CM_p(L;f,\rho)$ is a $\partial$-cycle if and only if it is a $d$-cycle in $C_p(L;f,\rho,J)$, and $x$ is a $\partial$-boundary if and only if it is a $d$-boundary. In particular, if $N_L\geq n+1$, then this embedding gives an isomorphism between $H(L,\Z_2)\tens\Lambda$ and $QH(L)$. 

\subsection{Piunikhin-Salamon-Schwarz isomorphisms}\label{sec:pss}
For generic $(f,\rho,H,J)$, there are chain morphisms
$\psi: C_*(L;f,\rho,J)\to CF_*(L;H)$ which induce the PSS isomorphisms
\begin{equation}\label{e:PSS}
\Psi_{PSS}:QH_*(L;f,\rho,J)\longrightarrow HF_*(L;H,J).
\end{equation}
Such isomorphisms have been studied in~\cite{Oh2,Oh3,KM,Al,BC,BC2}. Here we mainly follow the construction of Biran and Cornea~\cite{BC,BC2}. Given $q\in\crit(f)$, $\ga=[x,\ox]\in\crit\ahl$ and $A\in\pi_2(M,L)$, consider the configurations of maps $(u_1,\ldots,u_l)$ satisfying
 \begin{itemize}
 	\item each $u_i:(D,\partial D)\to (M,L)$, $i=1,\ldots,l-1$ is a $J$-holomorphic disk (which is allowed to be constant).
 	\item $u_1(-1)\in W^u(q)$.
 	\item For every $i\in\{1,\ldots,l-2\}$, $u_{i+1}(-1)\in W^u(u_i(1))$.
 	\item $u_l:\R\times[0,1]\to M$ is a solution of the equation
 	$$\partial_su+J\partial_tu+\chi(s)\nabla H_t(u)=0,$$
 	and is subject to the conditions: $u_l(\R\times\{0,1\})\subset L$, $u_l(+\infty,t)=x(t)$, $u_l(-\infty,t)\in W^u(u_{l-1}(1))$ for any $t\in[0,1]$,  where $\chi(s)$ is a smooth cutoff function satisfying $\chi(s)=0$ for $s\leq 0$ and $\chi(s)=1$ for $s\geq 1$.
 	\item $[u_1]+\ldots +[u_l\sharp \ox]=A$.
 \end{itemize}
We denote the moduli space of such maps by $\M^\psi(q,\ga):=\M^\psi(q,\ga;J,H,\chi,f,\rho)$. The virtual dimension of this moduli space is
\begin{equation}\label{e:dim}
\virdim\M^\psi(q,\ga)=\ind_f(q)-|\ga|+\mu(A).
\end{equation}

If $\virdim\M^\psi(q,\ga)=0$, then for generic $(f,\rho,H,J)$, one can obtain the appropriate transversality of the standard evaluation map,  so we can define on generators
$$\psi_{PSS}(q)=\sum\limits_{\ga,A}\sharp_2\M^\psi(q,\ga)\ga t^{\bar{\mu}(A)}$$
and extend this map by linearity over $\Lambda$. This extended map is a chain map and induces the PSS isomorphism $\Psi_{PSS}$ in~(\ref{e:PSS}).

\section{Spectral invariants}\label{sec:spectinv}

\subsection{The classical Ljusternik--Schnirelman theory and minmax critical values} Fix a ground field $\F$. Let $X$ be a closed smooth manifold with positive dimension. We will denote by $H_*(X)$ the singular homology of $X$ with coefficient field $\F$. 
Let $f\in C^\infty(X,\R)$ be a smooth function. For any $\nu\in\R$, we define $$X^\nu:=\{x\in X|f(x)<\nu\}.$$

To a non-zero singular homology class $a\in H_*(X)\setminus\{0\}$, we associate a  numerical invariant defined by
$$c_{LS}(a,f)=\inf\{\nu\in\R|a\in\im(i^\nu_*)\},$$
where $i^\nu_*:H_*(X^\nu)\to H_*(X)$ is the map induced by the inclusion $i^\nu:X^\nu\to X$. 
The function $$c_{LS}:H_*(X)\setminus\{0\}\times C^\infty(X,\R)\to\R$$ is called the \textit{minmax critical value selector} of $f$. The following  properties are well-known from classical Ljusternik-Schnirelman theory; see, e.g., \cite{HZ,Cha,Vi2,CLOT,GG}.

\begin{prop}\label{pp:minmax}
	The minmax critical value selector $c_{LS}$ satisfies the following properties.
\begin{enumerate}
	\item[{\rm 1.}] $c_{LS}(a,f)$ is a critical value of $f$, and $c_{LS}(ka,f)=c_{LS}(a,f)$ for any nonzero $k\in\F$.
	\item[{\rm 2.}]  $c_{LS}(a,f)$ is Lipshitz continuous in $f$ with respect to the $C^{0}$-distance.
	\item[{\rm 3.}] Let $[pt]$ and $[X]$ denote the point class and the fundamental class, respectively. Then
$$c_{LS}([pt],f)=\min f\leq c_{LS}(a,f)\leq\max f= c_{LS}([X],f).$$
	\item[{\rm 4.}] If $b\neq k[X]$ and $c_{LS}(a\cap b,f)= c_{LS}(a,f)$, then the set \[\Sigma=\{x\in\crit(f)|f(x)= c_{LS}(a,f)\}\] is homologically non-trivial.
\end{enumerate}
	
\end{prop}

\subsection{Hamiltonian spectral invariants}\label{subsec:hsi}
In this subsection we recall the definition of Hamiltonian spectral invariants following Oh~\cite{Oh4}, where these spectral invariants are studied for Hamiltonians on closed weakly monotone manifolds. For a further review of this subject, we refer to the papers by Viterbo~\cite{Vi}, Schwarz~\cite{Sc}, Oh~\cite{Oh4,Oh5,Oh6}, and Fukaya, Oh, Ohta and Ono~\cite{FOOO}. Assume that $(M,\omega)$ is a closed monotone symplectic manifold. We take $a\in QH_*(M,\Lambda)=(H(M,\Z_2)\tens\Lambda)_*$ and define the spectral invariant $\sigma(a,H)$ of $a$ as
$$\sigma(a,H)=\inf\big\{\nu\in\R|{\rm PSS}(a)\in\im(i^\nu)\big\},$$
where $i^\nu:HF^\nu(H,\Lambda)\to HF(H,\Lambda)$ is the natural map induced by the inclusion $CF^\nu(H,\Lambda)\hookrightarrow CF(H,\Lambda)$, ${\rm PSS}:QM(M,\Lambda)\longrightarrow HF(H,\Lambda)$ is the Hamiltonian Piunikhin-Salamon-Schwarz isomorphism. 

\begin{prop}
	The function $\sigma:QH(M,\Lambda)\setminus\{0\}\times \cH\to\R$ satisfies the properties:
	\begin{enumerate}
		\item[{\rm (HS1)}] {\rm Normalization:} For any $a\in H(M,\Z_2)$ and any $H\in \cH$ with sufficiently small $\|H\|_{C^2}$, we have that  $\sigma(a,H)=c_{LS}(a,H)$. In particular, $\sigma(a,0)=0$.
		\item[{\rm(HS2)}] {\rm Quantum shift:} For $\lambda\in\Lambda$, $\sigma(\lambda a,H)=\sigma(a,H)+I_\omega(\lambda)$.
		\item[{\rm(HS3)}] {\rm Valuation inequality:} $\sigma(a+b,H)\leq\max\{\sigma(a,H),\sigma(b,H)\}$. Moreover, if $\sigma(a,H)\neq\sigma(b,H)$ then this inequality is strict.
	\end{enumerate}
\end{prop}

\subsection{Lagrangian spectral invariants}\label{sec:lsi}
In this subsection we recall the construction of Lagrangian spectral invariants following Leclercq and Zapolsky~\cite{LZ}. These spectral invariants are the Lagrangian counterparts of spectral invariants described in Section~\ref{subsec:hsi}.  In other cases, they were constructed for Lagrangians in the cotangent bundle of a closed manifold~\cite{Oh2,Oh3,Le,MVZ}. Let $H\in\cH$, and let $J\in\cJ$. For generic $(f,\rho,H,J)$, we fix a nonzero homogeneous $\al\in QH_*(L)$ and define the spectral invariant $\ell(\al,H)$ of $\al$ as
$$\ell(\al,H)=\inf\big\{\nu|\Psi_{PSS}(\al)\in \im(i^\nu)\big\}$$
where $i^\nu: HF^\nu(L;H,J)\to HF(L;H,J)$ is the natural map induced by the inclusion of $CF^\nu(L;H)$ into $CF(L;H)$. This invariant is Lipschitz continuous with respect to the Hamiltonian function $H$, i.e.,
for any two nondegenerate $H,K\in \cH$ and $\al\neq 0$, we have
$$\int^1_0\min\limits_M(K_t-H_t)dt\leq \ell(\al,K)-\ell(\al, H)\leq \int^1_0\max\limits_M(K_t-H_t)dt.$$ 

Using this property, one can extend this spectral invariant to a map
$$\ell:QH(L)\times\cH\longrightarrow [-\infty,\infty).$$

The Lagrangian spectral invariant $\ell$ has the following properties:
\begin{enumerate}
	\item[(LS1)] Normalization: If $c$ is a function of time,  then
	$$\ell(\al,H+c)=\ell(\al,H)+\int^1_0c(t)dt,$$
	and $\ell(\al,0)=A_L\nu(\al)$.
	\item[(LS2)] Spectrality: $\ell(\al,H)\in\spec(H,L)$.
	\item[(LS3)] Quantum shift: $\ell(\al\otimes \lambda,H)=\ell(\al,H)+A_L\nu(\lambda)$ for all $\lambda\in\Lambda$.
	\item[(LS4)] Continuity: $\ell$ is Lipschitz continuous in $H$ with respect to the $C^0$-distance.
	\item[(LS5)] Triangle inequality: $\ell(\al\circ\beta,H\sharp K)\leq \ell(\al,H)+\ell(\beta,K)$.
	\item[(LS6)] Module structure: Let $K\in\cH$. For all $a\in QH(M,\Lambda)$ and $\al\in QH(L)$, we have
	$\ell(a\bullet\al,H\sharp K)\leq\ell(\al,H)+\sigma(a,K).$
	\item[(LS7)] Valuation inequality: $\ell(\al+\beta,H)\leq\max\{\ell(\al,H),\ell(\beta,H)\}$. Moreover, if $\ell(\al,H)\neq\ell(\beta,H)$ then this inequality is strict.
	\item[(LS8)] Duality: Set $\overbar{H}(t,x)=-H(-t,x)$. Then
	$$\ell(\al,\overbar{H})=-\inf\big\{\ell(\beta,H)|\beta\in QH_{n-k}(L),\;\epsilon_L(\alpha \circ \beta) \ne 0\big\}.$$
\end{enumerate}

\subsection{A relation between $c_{LS}$ and $\ell$}\label{sec:lsi=cls}

\begin{df}
Let $(X,g)$ be a smooth Riemannian manifold, and let $f\in C^\infty_c(X,\R)$. We say that $f$ is \emph{$C^2$-small} if 
\[
\|f\|_{C^2}:=\sup_{x\in M}|f(x)|+\|\nabla f(x)\|+\|\nabla\nabla f(x)\|<\epsilon
\]
for some sufficiently small $\epsilon>0$, where $\nabla$ is the Levi-Civita connection with respect to the metric  $g$, and each $\|\cdot\|$ is the norm induced by $g$. 
\end{df}

Suppose now that $L\subseteq (M,\omega)$ is a closed Lagrangian submanifold equipped with a Riemannian metric $\rho$. For $R>0$, the ball bundle $T^*_RL$ of radius $R$ is defined by
$$T^*_RL:=\{(q,p)\in T^*L|\;\|p\|\leq R\}.$$

 \textbf{In the following, we identify $T^*L$ with some Weinstein neighborhood of $L$ in $M$.} For a $C^2$-small function $f$ on $L$, we pick some $R>0$ so that 
 \[
 L^f:=\{(q,\partial_q f(q))\in T^*L|q\in L\}\subseteq T^*_RL.
 \]

We pick a monotonically decreasing function $\lambda:(0,\infty)\to\R$ such that $\lambda(r)=1$ for $0< r\leq R$ and $\lambda(r)=0$ for $r\geq R+1$.  Consider the following time independent Hamiltonian
$H_f:M\to \R$:
\begin{equation}\notag
H_f(x)=
\begin{cases}
\lambda(\|p\|)f(q)&\hbox{if}\; x=(q,p)\in T^*L;\\
0&\hbox{if}\;  x\in M\setminus T^*L.\\
\end{cases}
\end{equation}
 
 Clearly, $H_f$ is compactly supported in $T^*_{R+1}L$
  and $H_f=f\circ\pi$ on $T^*_RL$, where $\pi:T^*L\to L$ is the projection map. Moreover, we require that the \emph{lift} $H_f$ of $f$ is also $C^2$-small.

In the following 	we will use the canonical embedding $H_p(L,\Z_2)\tens\Lambda_*\hookrightarrow QH_{p+*}(L)$ for a monotone wide Lagrangian $L$ with $p\geq n-N_L+1$, see~Section~\ref{subsec:emb}.

\begin{prop}\label{pp:ls=minmax}
	Let $f:L\to\R$ be a smooth $C^2$-small function, and $H_f\in C^\infty_c(M,\R)$ the corresponding lift. If $L$ is a monotone wide Lagrangian  with $N_L\geq 2$, then
	$$\ell(a,H_f)=c_{LS}(a,f)\quad\hbox{for any}\;a\in H_{*\geq n-N_L+1}(L,\Z_2).$$
\end{prop}

\begin{proof}
	
	Since both $\ell(a,H)$ and $c_{LS}(a,f)$ are continuous in $H$ and $f$ with respect to $C^0$-topology respectively, it suffices to prove the proposition for a  $C^2$-small Morse function $f:L\to\R$ and the corresponding lift $H_f:M\to\R$. In what follows, we  identify the singular homology $H_*(L,\Z_2)$ with the Morse homology $HM(f,\rho)$ of a Morse-Smale pair $(f,\rho)$.

	First, it is easy to see that the Lagrangian $\varphi_{H_f}(L)$ is the graph of $df$ in $T^*L$ and intersects $L$ transversely, and that a critical point $q$ of $f$ is exactly an intersection point between $L$ and $\varphi_{H_f}(L)$. So we get
	\begin{equation}\label{e:action}
	\mathcal{A}_{H_f,L}([q,\overbar{q}])=f(q),
	\end{equation}
	where $\overbar{q}$ is the constant capping of the constant path $q$. Moreover, from our definition (cf. Section~\ref{subsec:lfh}) of the grading of Lagrangian Floer homology we have $\mu(q,\overbar{q})=\ind_f(q)$.

Next, we will prove that for a critical point $q$ of $f$ with $\ind_f(q)\geq n-N_L+1$,  the PSS map $\psi_{PSS}$, given by the moduli space $\M^\psi(q,\ga)=\M^\psi(q,\ga;J,H_f,\chi,f,\rho)$ as in Section~\ref{sec:pss},  sends $q$ to $[q,\overbar{q}]$, where   $\ga$ is a critical point of $\mathcal{A}_{H_f,L}$ having the form $\ga=[q',\overbar{q}'\sharp B]$ for some disk $B$ with boundary  $\partial B\subseteq L$ passing through $q'\in\crit(f)$.

To this end, firstly, we prove that  for $\|f\|_{C^2}$  small enough, the last component $u_l$ of each curve $(u_1,\ldots,u_l)\in \M^\psi(q,\ga)$ 
must be in $T^*_{R+1}L$. 
	
Note that the metric $\rho$ on $L$ defines a natural almost complex structure $J_\rho$ on $T^*L$ which is compatible with the symplectic form $\omega(\cdot,\cdot)=-G_\rho (\cdot,J_\rho \cdot)$, where $G_\rho$ is the induced metric on $T^*L$ by $\rho$. We now extend $J_\rho$ and $
G_\rho$ from the Weinstein neighborhood of $L$ to $M$ so that the extended Riemannian metric and almost complex structure are compactible and $L$ is totally geodesic with respect to the corresponding metric, and denote it by $J$ and $G$, respectively. For the reason why such extension exists, we refer to~\cite[Exercise~4.16]{MS}. 
	
	


	When the virtual dimension of $\M^\psi(q,\ga)$ is zero,  by the inequality $\ind_f(q)\geq n-N_L+1$, we deduce from the dimension formula~(\ref{e:dim}) that 
	\[\mu(u_1)+\cdots+ \mu(u_l)=0\]
	and $\ind_f(q)=\ind_f(q')$. We notice that $u_l$ is a continuous disk with boundary on $L$ because its positive asymptotic is a constant chord $q'$, and its negative asymptotic is a point $x$ in $L$. 
	Then from the energy identity 
	$$E(u_l)=\mathcal{A}_{H_f,L}([x,\overline{q'}\sharp (-u_l)])-\mathcal{A}_{H_f,L}([q',\overline{q'}])+\int^{+\infty}_{-\infty}\int^1_{0}\chi'(s)H_f(u_l(s,t))dsdt,$$  and the non-negativity of energy we deduce that 
	$\mu(u_l)\geq 0$. Otherwise, since $L$ is monotone, the last component would have negative energy $E(u_l)=\omega([u_l])+O(\delta)$ where $H_f$ has a $C^2$-norm bounded by $\delta$. 
	Therefore, we have
	\[\mu(u_1)=\cdots=\mu(u_l)=0,\]
and so all the $J$-holomorphic disks $u_1,\ldots,u_{l-1}$ are constants. 

Moreover, $E(u_l)=O(\delta)$. In this case, for sufficiently small $\delta>0$, we have a point-wise upper bound 
\begin{equation}\label{e:energy}
\|\partial_s u_l(s,t)\|\leq O(E(u_l)^{1/4})\quad\forall\;(s,t)\in\R\times[0,1].
\end{equation}
 Our argument follows closely the line of reasoning in~\cite[Appendix~B]{RS} and \cite[Section~1.5]{Sal2}. The inequality (\ref{e:energy}) is a consequence of the fact that the energy density $F=\|\partial_s u\|^2/2$ satisfies the differential inequality
 \begin{equation}\label{e:mean}
	\Delta F\geq -aF^2-b
\end{equation}
 for two constants $a,b\geq 0$, where 
$u$ is any solution of the Floer equation
\begin{equation}\notag
	\begin{cases}
	\partial_su+J\partial_tu+\chi(s)\nabla H_f(u)=0,\\
	u(\R\times\{0,1\})\subset L.\\
	\end{cases}
\end{equation}
For a detailed proof of (\ref{e:mean}), we refer to~\cite[Section~5]{Sal1}. Here we point out that the extra factor $\chi(s)$ appearing in the Floer equation does not effect the ultimate estimate because all derivatives of the function $\chi$ are bounded, and $\|H_f\|_{C^2}$ is small enough.

 Denote 
$$\bH=\{z\in\C:\im(z)\geq 0\},\quad H_r(z_0)=\{z\in\bH: |z-z_0|<r\}\;\hbox{with}\; z_0\in\bH.$$

We will show that 
\begin{equation}\label{e:pointest}
F(0,0)\leq O(E(u_l)^{1/2}).
\end{equation} 
For other points in the strip $\R\times [0,1]$, the proof is similar. Then (\ref{e:energy}) follows from similar estimates for all points in $\R\times [0,1]$. 

 In the proof of the inequality (\ref{e:pointest}), we abbreviate $u=u_l$. Denote $\xi=\partial_s u,\eta=\partial_t u$. 
Consider the normal derivative of $F$ on $H_r(0)\cap\R$ to obtain 
\begin{eqnarray}
\partial_tF(s,0)&=&\langle\xi(s,0),\nabla_t\xi(s,0)\rangle\notag\\
&=&\langle\xi(s,0),\nabla_s\eta(s,0)\rangle\quad\hbox{by using}\;\nabla_s\eta=\nabla_t\xi\notag\\
&=&\big\langle\xi(s,0),\nabla_s\big(J(u(s,0))\xi(s,0)+\chi(s)J(u(s,0))\nabla H_f(u(s,0))\big)\big\rangle\notag\\
&=&\big\langle\xi(s,0),\nabla_\xi J(u(s,0))\xi(s,0)\big\rangle+\big\langle\xi(s,0), J(u(s,0))\nabla_s\xi(s,0)\big\rangle\label{e:perp}\\
&&+\big\langle\xi(s,0),\chi'(s)J(u(s,0))\nabla H_f(u(s,0))+\chi(s)\nabla_s\big(J\nabla H_f\big)(s,0)\big\rangle\notag\\
&=&\chi'(s)\big\langle\xi(s,0),J(u(s,0))\nabla H_f(u(s,0))\big\rangle+\chi(s)\big\langle \xi(s,0),\nabla_s\big(J\nabla H_f\big)(s,0)\big\rangle\notag\\
&=&-\chi(s)\langle \nabla_s\xi(s,0),J(u(s,0))\nabla H_f(u(s,0))\big\rangle\label{e:perp1}\\
&=&\chi(s)\langle J(u(s,0))\nabla_s\xi(s,0),\nabla H_f(u(s,0))\big\rangle\label{e:perp2}\\
&=&0\notag.
\end{eqnarray}
The first term in (\ref{e:perp}) vanishes because $\nabla_\xi J(u)$ is skew-symmetric with respect to the metric $G$~(cf. \cite[Lemma~4.1.14]{MS}). The second term in (\ref{e:perp}) vanishes because $L$ is totally geodesic, and hence we have $\nabla_s\xi(s,0)\in T_{u(s,0)}L$ and $J(u(s,0))\nabla_s\xi(s,0)$ is orthogonal to $T_{u(s,0)}L$. The equality (\ref{e:perp1}) holds due to the following reasons. The metric $
\rho$ determines an isometry $TL\to T^*L$, and a direct summand of the vertical bundle $T^vT^*L$ and the horizontal bundle $T^hT^*L$ with isomorphisms
\[T_xT^*L=T^v_xT^*L\oplus T^h_xT^*L\cong T_qL\oplus T_q^*L\cong T_qL\oplus T_qL,\quad x=(q,p)\in T^*L.\]
With respect to these splittings, 
using $u(s,0)\in L$ and writing $u=(q,p)$, we find that 
\[\nabla H_f(u(s,0))=\big(\hbox{grad}_\rho f(q(s,0)),0\big)\in T_{q(s,0)}L\oplus\{0\}.\]
The almost complex structure $J$ on $T^*L$ has the form
\[
J(q,p)=\begin{pmatrix}
  0 & I \\
  -I & 0
\end{pmatrix},\quad (q,p)\in T^*L,
\] 
and the metric $G$ on $T^*L$ has the form
\[
G(q,p)=\begin{pmatrix}
  \rho & 0 \\
  0 & \rho
\end{pmatrix},\quad (q,p)\in T^*L.
\] 
So $\xi(s,0)=(\partial_sq(s,0),0)$ is orthogonal to $J(u(s,0))\nabla H_f(u(s,0))$, and hence we get (\ref{e:perp1}). The term in (\ref{e:perp2}) vanishes because $J(u(s,0))\nabla_s\xi(s,0)$ is orthogonal to $T_{u(s,0)}L$ and $\nabla H_f(u(s,0))\in T_{u(s,0)}L$. 

Consequently, one can extend $F$ by reflection to a twice continuously differential function on the open disc $B_r(0):=\{z\in\C:|z-z_0|<r\}$. We denote the extended function as $F$. It holds that $F(\overbar{z})=F(z)$ and $\Delta F\geq -a'F^2-b'$ for some constants $a',b'\geq 0$. Since 
$$\int_{B_r(0)}F\leq\frac{1}{2}E(u_l)=O(\delta)$$ for $\delta$ small enough,  it follows from  Lemma~\ref{lem:meanval} that
\[
F(0,0)\leq \frac{b' r^2}{8}+\frac{2}{\pi r^2}\int_{B_r(0)}F.	
\]
Let $E=E(u_l)$ and $r=E^{1/4}$. Then we find
\[
F(0,0)\leq 	\frac{b' E^{1/2}}{8}+\frac{2}{\pi E^{1/2}}\cdot\frac{1}{2}E=\bigg(\frac{b'}{8}+\frac{1}{\pi}\bigg)E^{1/2}.	
\]
So we have (\ref{e:pointest}).

By (\ref{e:energy}), $\|\partial_t u_l\|$ satisfies the point-wise estimate
\[\|\partial_t u_l(s,t)\|=\|\partial_su_l(s,t)+\chi(s)\nabla H_f(u_l(s,t))\|\leq O(\delta^{1/4}).\]

Note that $u_l (\R\times\{0,1\})\subseteq L$, this implies that $u_l$ must be in $T^*_rL$ for any given $r\geq R+1$ provided that $\delta>0$ is small enough, and thus satisfies the following Floer equation:
	   \begin{equation}\label{e:floereq}
	\begin{cases}
	\partial_su+J_\rho\partial_tu+\chi(s)\nabla H_f(u)=0,\\
	u(\R\times\{0,1\})\subset L.\\
	\end{cases}
	\end{equation}

Now we prove that $u_l$ has to be independent of the variable $t$.  
We write $u_l=(q,p)$ and let 
\[ g(s)=\frac{1}{2}\int^1_0|p(s,t)|^2dt.\] 

Suppose that $s_0$ is a maximum point of $g$. 
Then for $\|f\|_{C^2}$ small enough, it follows from Lemma~\ref{lem:apriori}	that 
	\[0\geq\frac{\partial^2 g}{ds^2}(s_0)\geq \frac{1}{2}\int^1_0\big(|\nabla_s p|^2(s_0,t)+|\nabla_t p|^2(s_0,t)\big)dt.\]
The non-negativity of the right hand side of the above inequalities implies that 
\[|\nabla_s p|(s_0,t)\equiv0\equiv|\nabla_t p|(s_0,t)\]
for all $t\in[0,1]$. The condition $u_l(\R\times\{0,1\})\subset L$ implies that $p(s,0)=p(s,1)=0$ for all $s$. Therefore, 
\[\langle p(s_0,t),p(s_0,t)\rangle\equiv 0,\quad \forall t\in[0,1]\] 
and so the maximum of $g$ satisfies $g(s_0)=0$. Then the non-negativity of $g$ implies that $g\equiv 0$, and hence $p(s,t)\equiv 0$ for all $s,t$. Substituting $u_l=(q,0)$ into (\ref{e:fleq}), we obtain that
\[\frac{\partial q}{\partial s}(s)=-\chi(s)\hbox{grad}_\rho f(q(s)),\]
and that $\partial_t q\equiv 0$. Hence, $q$ is independent of the variable $t$, and so does $u_l$.  

 Summing up, if the virtual dimension of $\M^\psi(q,\ga)$ is zero,  then  there are no non-trivial holomorphic disks $u_i$, $i=1,\ldots,l-1$, $u_l$ is a flow line of $-\chi(s)\hbox{grad}_\rho f$, and $q,q'$ have the same Morse index. Moreover, we have $\ga=(q,\overbar{q})$ because there are no non-trivial gradient flow lines between two critical points with the same index for the Morse-Smale pair $(f,\rho)$.  
 So $\M^\psi(q,\ga)$ consists of negative gradient flow lines of $f$ (up to a reparametrization) from $q$ to $\ga=(q,\overbar{q})$. This implies that the only element of $\M^\psi(q,\ga)$ is the constant flow line by dimension reasons, and hence $\psi_{PSS}$ sends $q$ to $[q,\overbar{q}]$.
 
 As a consequence, for $p\geq n-N_L+1$, we have the map
   \begin{equation}\label{e:inclusion}
   \Psi_{PSS}\circ i: H_p(L,\Z_2)\longrightarrow  HF(H_f,J),\quad
   \big[\sum_kq_k\big]\longmapsto \big[\sum_k(q_k,\overbar{q}_k)\big],
   \end{equation}
where $i$ is the embedding map from $H_p(L,\Z_2)\cong H_p(L,\Z_2)\tens\Lambda_0$ to $QH_*(L,f,\rho)$ for each $p\geq n-N_L+1$.

	Combining (\ref{e:action}) and (\ref{e:inclusion}) yields $\ell(a,H_f)=c_{LS}(a,f)$ for all $a\in H_{*\geq n-N_L+1}(L,\Z_2)$.
 \end{proof}
	
\section{Proofs of main results: Lagrangian Ljusternik--Schnirelman theory}\label{sec:mainthms}

\subsection{Proof of Theorem~\ref{thm:lls}}
First of all, by the triangle inequality (LS5) and the normalization property (LS1), we have
$$\ell(\al\circ \beta, H)=\ell(\al\circ \beta, 0\sharp H)\leq \ell(\al,0)+\ell(\beta,H)=\ell(\beta,H)+A_L\nu(\al).$$

\textbf{We prove the second statement.} To prove the strict inequality we proceed in three steps.\\
\noindent\textbf{Step 1.} Take a $C^2$-small Morse function $f\in C^\infty(L,\R)$, and denote $H_f\in C^\infty_c(M,\R)$ its corresponding lift as in Section~\ref{sec:lsi=cls}. Let $\mathcal{S}_f=\{x_1,\ldots, x_k\}$ denote the set of the local maximum points of $f$  (each $x_i$ has Morse index equal to $\dim L$).

We shall prove the following.
\begin{lem}\label{clm:small}
	For any $\al\in \widehat{QH}(L)$, we have
	\begin{equation}\label{e:lsineq}
	\ell(\al,H_f)\leq A_L\nu(\al)+\max\big\{f(x)|x\in \crit(f)\setminus\mathcal{S}_f \big\}.
	\end{equation}
\end{lem}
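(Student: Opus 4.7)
The plan is to produce a representative of $\al$ in the pearl complex $C(L;f,\rho,J)$ that avoids the unique top Morse-index critical point $q_{max}$, and then to push it forward through the Lagrangian PSS chain map while tracking the Floer action filtration.

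First I would select a chain-level representative of $\al\in\widehat{QH}_*(L)$ not involving $q_{max}$. Since $f$ has $q_{max}$ as its unique critical point of Morse index $n$, the top-Morse-index part of the pearl complex is exactly $q_{max}\otimes\Lambda$, and in Morse homology the fundamental class $[L]$ is represented by $q_{max}$. Wideness of $L$ together with the splitting $QH(L)=\widehat{QH}_*(L)\oplus[L]\otimes\Lambda$ then allows any class $\al\in\widehat{QH}_*(L)$ to be written as a $d$-cycle $\xi=\sum_k q_k\otimes\lambda_k$ with each $q_k\in\crit(f)\setminus\{q_{max}\}$ and $\nu(\xi)=\nu(\al)$. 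The delicate point is that the pearl differential can a priori produce $q_{max}$-contributions from critical points of index $n+1-N_L$ via disks of Maslov index $N_L$, but when $f$ is taken so small that $\max f-\min f<A_L$, the splitting lifts to the chain level with valuation control.

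Next I would apply the PSS chain map $\psi_{PSS}\colon C(L;f,\rho,J)\to CF_*(L;H_f,J)$ to $\xi$ and bound the Floer action of $\psi_{PSS}(\xi)$, which still represents $\Psi_{PSS}(\al)$. Each term contributing to $\psi_{PSS}(q_k)\otimes\lambda_k$ has the form $[y,\overbar{y}\sharp A]\otimes\lambda_k$ for some $y\in\crit(f)$ and $A\in\pi_2(M,L)$, with action $f(y)-\omega(A)+A_L\nu(\lambda_k)$. If $y\neq q_{max}$, this is bounded above by $\max\{f(q):q\neq q_{max}\}+A_L\nu(\lambda_k)$. If instead $y=q_{max}$, then since $q_k\neq q_{max}$ the configuration must include a nonconstant disk (a pure negative gradient flow cannot ascend to the maximum), forcing $\omega(A)\geq A_L$; hence the action is at most $f(q_{max})-A_L+A_L\nu(\lambda_k)\leq\max\{f(q):q\neq q_{max}\}+A_L\nu(\lambda_k)$, the last inequality following from $\max f-\min f<A_L$. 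Taking the maximum over $k$ produces the bound $A_L\nu(\xi)+\max\{f(q):q\neq q_{max}\}=A_L\nu(\al)+\max\{f(q):q\neq q_{max}\}$ on the action of $\psi_{PSS}(\xi)$, and the definition of $\ell$ immediately yields~(\ref{e:lsineq}).

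The main obstacle will be the first step: showing that every class in $\widehat{QH}_*(L)$ admits a chain representative avoiding $q_{max}$ while simultaneously preserving the valuation. This sits at the interface between the $\Lambda$-module splitting of $QH(L)$ for wide Lagrangians and the concrete structure of the pearl differential, and it is exactly where $C^2$-smallness of $f$ is essential: without it, pearly trajectories from low-index critical points into $q_{max}$ could obstruct such a representation and force nontrivial $q_{max}\otimes\lambda$ terms into every cycle representing $\al$.
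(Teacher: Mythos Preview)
Your approach is essentially the same as the paper's: represent $\alpha$ in the pearl complex avoiding $q_{max}$, push through the PSS chain map, and bound the Floer action of the resulting cycle. The only substantive difference is in how you organise the action estimate in Step~2. The paper cases on whether the total disk class $A$ vanishes and, when $A\neq 0$, bounds the action by $f(q_k)+A_L\nu(\lambda_k)$ using $|f|_{C^0}<A_L/2$ together with $\bar\mu(A)\geq 1$; you instead case on whether the endpoint $y$ equals $q_{max}$ and, when it does, use that a nonconstant disk is forced (since negative gradient flow cannot reach the maximum) so $\omega(A)\geq A_L$, combined with $\max f-\min f<A_L$. Both arguments are correct and yield the same bound.

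One point deserves comment. You correctly flag Step~1 --- finding a cycle for $\alpha$ that avoids $q_{max}$ and realises the valuation $\nu(\alpha)$ --- as the delicate step, and the paper in fact also asserts this without detailed justification. However, your explanation of \emph{why} this step works is off: you attribute it to $C^2$-smallness of $f$ (``when $f$ is taken so small that $\max f-\min f<A_L$, the splitting lifts to the chain level with valuation control''). This is not where the smallness enters. The pearl differential depends only on the Morse--Smale structure of $(f,\rho)$ and on $J$, not on the scale of $f$; rescaling $f$ does not change which pearly trajectories exist. The $C^2$-smallness is used for a different purpose, namely Floer's argument that the last (Hamiltonian-perturbed) strip in each PSS configuration is $t$-independent, so that the PSS moduli spaces reduce to pearly trajectories and the action bookkeeping in Step~2 goes through. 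The condition $\max f-\min f<A_L$ is then used \emph{in Step~2} (your case $y=q_{max}$) to absorb the disk area, not in Step~1. The chain-level representability of $\alpha\in\widehat{QH}(L)$ by a cycle in $\bigoplus_{q\neq q_{max}}\Lambda q$ follows rather from the fact that $q_{max}$ is itself a pearl cycle (representing the unit $[L]$), so subtracting off the $q_{max}$-component of any cycle yields another cycle; the identification of $\widehat{QH}(L)$ with the classes so representable is what makes the first step go.
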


\noindent {\bf Proof of Lemma~\ref{clm:small}.}
 Since the Lagrangian $\varphi_{H_f}(L)$ is the graph of $df$ in $T^*L$ and intersects $L$ transversely,  a critical point $q$ of $f$ is exactly an intersection point between $L$ and $\varphi_{H_f}(L)$. As before, we have
\begin{equation}\label{e:ptaction}
\mathcal{A}_{H_f,L}([q,\overbar{q}])=f(q),
\end{equation}
where $\overbar{q}$ is the constant capping of the constant path $q$, and $\mu(q,\overbar{q})=\ind_f(q)$.

As in the proof of Proposition~\ref{pp:ls=minmax},  for $q\in\crit(f)$ and $\ga\in\crit(\mathcal{A}_{H_f,L})$, we consider the moduli space $\M^\psi(q,\ga)=\M^\psi(q,\ga;J,H_f,\chi,f,\rho)$. If $\|f\|_{C^2}$ is sufficiently small, then for a generic choice of the pair $(\rho,J)$, one can show as before that the last component $u_l$ of any element $(u_1,\ldots,u_l)$ in
$\M^\psi(q,\ga)$ has to be independent of variable $t$ (here $u_l$ is a flow line of $-\chi(s)\nabla^\rho f$). So $\M^\psi(q,\ga)$, in fact, consists of pearly trajectories connecting $q$ to $q'\in\crit(f)$. If the virtual dimension of $\M^\psi(q,\ga)$ is zero, then by the dimension formula~(\ref{e:dim}) we have $$|\ga|=\ind_f(q)+\mu(A).$$ Now we consider two cases: (1) if $\ga=[q,\overbar{q}]$ then $A=0$, in this case the only pearly trajectory from $q$ to $\ga$  is the constant flow line; (2) if $\ga\neq [q,\overbar{q}]$ then $A\neq 0$ by dimension reasons. Notice that
for each nonzero $\al\in\widehat{QH}(L;f,\rho,J)$, any its representation $\sum_kq_k\tens \lambda_k$ has the property that $\ind_f(q_k)<n$ for each $k$ by degree reasons.  Consequently, we have the map
\begin{eqnarray}
\Psi_{PSS}: QH(L;f,\rho,J)\longrightarrow  HF_*(H_f,J),\notag\\
\big[\sum_kq_k\otimes\lambda_k\big]\longmapsto \bigg[\sum_k[q_k,\overbar{q}_k]\otimes\lambda_k+\sum_l[q_l',\overbar{q}_l'\sharp A_l]\otimes\lambda_l\bigg],\label{e:image}
\end{eqnarray}

Now we look at the actions of the generators appearing in the right hand side of~(\ref{e:image}). By (\ref{e:ptaction}), for every $k,l$,  we have
\begin{equation}\label{e:sumand1}
\mathcal{A}_{H_f,L}([q_k,\overbar{q}_k]\otimes\lambda_k)=f(q_k)+A_L\nu(\lambda_k)
\end{equation}
and $$\mathcal{A}_{H_f,L}\big([q'_l,\overbar{q}'_l\sharp A_l]\otimes\lambda_l\big)=\mathcal{A}_{H_f,L}\big([q'_l,\overbar{q}'_l]\tens t^{\overbar{\mu}(A_l)}\lambda_l\big)=f(q_l')-\overbar{\mu}(A_l)A_L+A_L\nu(\lambda_l).$$
So if  $|f|_{C^0}<A_L/2$, for all $l$ we have
\begin{eqnarray}\label{e:sumand2}
\mathcal{A}_{H_f,L}\big([q'_l,\overbar{q}'_l\sharp A_l]\otimes\lambda_l\big)&=&f(q_l)+A_L\nu(\lambda_l)+f(q_l')-f(q_l)-\overbar{\mu}(A_l)A_L\notag\\
&<&f(q_l)+A_L\nu(\lambda_l)+(1-\overbar{\mu}(A_l))A_L\notag\\
&\leq& f(q_l)+A_L\nu(\lambda_l),
\end{eqnarray}
where each $q_l\in\crit(f)$ coming from the representation $\sum_kq_k\otimes\lambda_k$ is the starting point of some pearly trajectory with end point $q'_l$, and we have used $\overbar{\mu}(A_l)\geq 1$ (due to $N_L\geq 2$) for each $l$ in the third inequality.

Since $\al\in \widehat{QH}(L)$, 
 each $q_k$ appearing in $\sum_kq_k\otimes\lambda_k$ does not belong to $\mathcal{S}_f$. So we have
$$\max_k f(q_k)\leq\max\big\{f(x)|x\in \crit(f)\setminus\mathcal{S}_f \big\}.$$
This, combining with (\ref{e:image})--(\ref{e:sumand2}), implies that for any representation $\sum_kq_k\otimes\lambda_k $ of $\al$, we have
$$\ahl\bigg(\psi_{PSS}\big(\sum_kq_k\otimes\lambda_k\big)\bigg)\leq \max\big\{f(x)|x\in \crit(f)\setminus\mathcal{S}_f \big\}+A_L\max_k\nu(\lambda_k)$$
whenever $\|f\|_{C^2}$ is sufficiently small. Therefore, by the definition of the valuation $\nu$ on $QH(L;\cD)$ (cf. (\ref{e:val1})), we conclude the desired inequality. This completes the proof of Lemma~\ref{clm:small}. \qed

\noindent\textbf{Step 2.} Since the intersections of $L$ and $\varphi_H(L)$ are isolated,  one can pick a small open neighborhood $U$ of $L\cap \varphi_H(L)$ in $L$ such that the number of the components of $U$ is equal to that of the intersection points. Let $f:L\to\R$ be a smooth function such that $f=0$ on $\overbar{U}$, $f<0$ on $L\setminus \overbar{U}$, and all critical points in $\crit(f)\setminus \overbar{U}$ are nondegenerate as illustrated in Figure~\ref{fig:func0}. Let $H_f$ be a lift of $f$ as in Section~\ref{sec:lsi=cls}.
Clearly, $f$ is not a Morse function on $L$. Now we modify $f$ into a Morse function which satisfies the conditions of Step 1.  For any $\eta>0$, pick a smooth Morse function $f_\eta:L\to\R$ such that $\|f_\eta-f\|_{C^0}<\eta$, $f_\eta=f$ outside of a small neighborhood of $\overbar{U}$, and  $f_\eta$  has the same non-degenerate critical points with $f$ except for some local maximum points on $U$ (cf. Figure~\ref{fig:func1}).  Denote by $H_{f_\eta}$ the corresponding lift of $f_\eta$ on $M$.

 Given
$0\neq\al\in \widehat{QH}(L)$, by Step 1,  for sufficiently small $\varepsilon>0$ we have
\begin{eqnarray}\label{e:sine}
\ell(\al,\varepsilon H_{f_\eta})&\leq& A_L\nu(\al)+\varepsilon\max\big\{f_\eta(x)|x\in \crit(f_\eta)\setminus \mathcal{S}_{f_\eta} \big\}\notag\\
&\leq&A_L\nu(\al)+\varepsilon\max\big\{f(x)|x\in \crit(f)\setminus\overbar{U} \big\}.
\end{eqnarray}
Since $\ell(\al,H)$ is continuous with respect to $H$ in $C^0$-topology, by letting $\eta\to 0$ we deduce from (\ref{e:sine}) that
\begin{equation}\label{e:sineq}
\ell(\al,\varepsilon H_f)\leq A_L\nu(\al)+\varepsilon\max\big\{f(x)|x\in \crit(f)\setminus\overbar{U} \big\}<A_L\nu(\al).
\end{equation}

\begin{figure}[]
	\centering
	\includegraphics[scale=0.7]{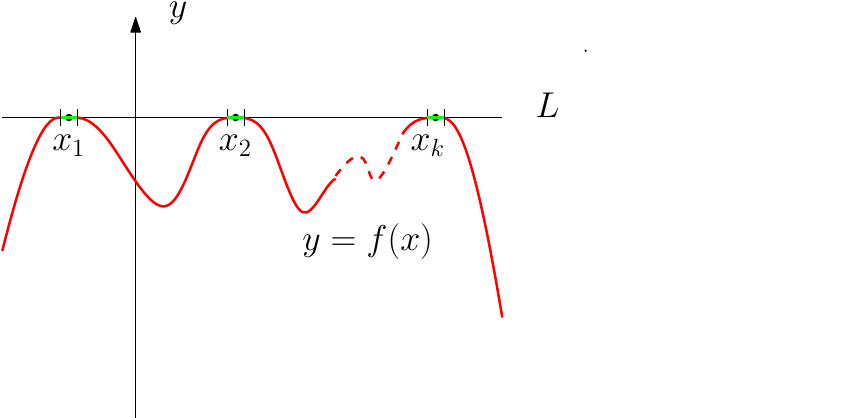}
	\caption{The function $f$.}\label{fig:func0}
	\centering
	\includegraphics[scale=0.7]{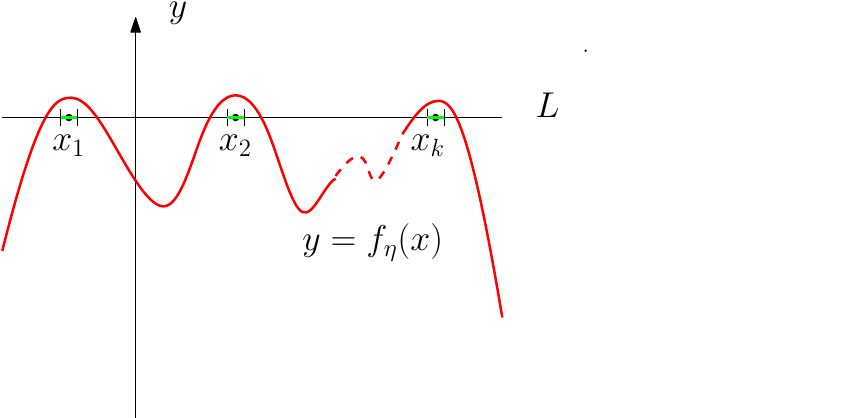}
	\caption{The function $f_\eta$.}\label{fig:func1}
\end{figure}

\noindent\textbf{Step 3.} Let $f$, $U$ and $H_f$ be as in Step 2. We will show the following lemma whose proof given here is inspired by~\cite{BHS1} (in which the symplectic form is exact). Similar arguments in the Hamiltonian case can be found in~\cite{GG,How}.
\begin{lem}\label{clm:sv}
	For sufficiently small $\varepsilon>0$ we have
	$\ell(\al\circ\beta,H)=\ell(\al\circ\beta,\varepsilon H_f\sharp H).$
\end{lem}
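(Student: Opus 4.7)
The plan is to combine the Hofer--Lipschitz continuity and spectrality of Lagrangian spectral invariants with a local-stability analysis of the action spectrum. Invoking (LS5), the map $\varepsilon\mapsto \ell(\alpha\circ\beta,\varepsilon H_f\sharp H)$ is Lipschitz in $\varepsilon$ (with constant controlled by $\|H_f\|_{C^0}$), hence continuous and convergent to $\ell(\alpha\circ\beta,H)$ as $\varepsilon\to 0^+$; by (LS2), each of its values lies in $\spec(\varepsilon H_f\sharp H,L)$.

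The crucial input is that since $f\equiv 0$ on the open set $\overline{U}\supset L\cap\varphi_H(L)$ and $H_f=f\circ\pi$ in a Weinstein neighborhood of $L$, the Hamiltonian $H_f$ vanishes identically in a full $M$-neighborhood of every point of $L\cap\varphi_H(L)$; consequently $X_{H_f}$ vanishes there and $\varphi^t_{\varepsilon H_f}$ fixes each such point for all $t$. Under the convention $\varphi^t_{\varepsilon H_f\sharp H}=\varphi^t_{\varepsilon H_f}\circ \varphi^t_H$ consistent with (LS8), a continuity argument then shows that for $\varepsilon$ sufficiently small the intersection set $L\cap\varphi^1_{\varepsilon H_f\sharp H}(L)$ coincides with $L\cap\varphi_H(L)$: any candidate additional intersection near $q\in L\setminus\overline{U}$ would require $(\varphi^1_H)^{-1}$ to send a small perturbation of $q$ into $L$, contradicting $(\varphi^1_H)^{-1}(q)\notin L$.

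For each persistent intersection $z\in L\cap\varphi_H(L)$, the associated chord $\gamma_z$ of $\varepsilon H_f\sharp H$ decomposes under the $\sharp$-concatenation formula for the symplectic action as the juxtaposition of the original $H$-chord $x$ ending at $z$ with the $\varepsilon H_f$-chord based at $z$; this second piece is the constant chord at $z$ (because $\varphi^t_{\varepsilon H_f}(z)\equiv z$) and $\varepsilon H_f$ vanishes identically on it, so that with matched cappings one obtains $\mathcal{A}_{\varepsilon H_f\sharp H,L}([\gamma_z,\overline{\gamma_z}])=\mathcal{A}_{H,L}([x,\overline{x}])$. Hence $\spec(\varepsilon H_f\sharp H,L)$ coincides with $\spec(H,L)$ for all small $\varepsilon>0$. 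Since $\spec(H,L)$ is closed and nowhere dense in $\R$ and therefore has only singleton connected subsets, the continuous function $\varepsilon\mapsto \ell(\alpha\circ\beta,\varepsilon H_f\sharp H)$ on $[0,\varepsilon_0]$ is forced to be constant, and hence equal to $\ell(\alpha\circ\beta,H)$.

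I expect the main obstacle to lie in the clean verification that persistent chords have \emph{exactly} the same action (not merely $O(\varepsilon)$-close). This relies essentially on $H_f$ vanishing in a full open $M$-neighborhood of each point in $L\cap\varphi_H(L)$, not merely at these points themselves, together with a careful convention-dependent bookkeeping of cappings in the $\sharp$-decomposition of the action.
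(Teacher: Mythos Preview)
Your overall strategy matches the paper's: show that for small $\varepsilon$ the intersection sets $L\cap\varphi_{\varepsilon H_f}\varphi_H(L)$ and $L\cap\varphi_H(L)$ coincide, that the action spectra agree exactly, and then conclude by continuity and nowhere-density of the spectrum. The first and last steps are fine.

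The genuine gap is in the action-preservation step. Under the convention you state, $\varphi^t_{K\sharp H}=\varphi^t_K\circ\varphi^t_H$ with $K=\varepsilon H_f$, the chord of $K\sharp H$ associated to $z\in L\cap\varphi_H(L)$ is $\gamma(t)=\varphi_K^t(x(t))$, where $x(t)$ is the $H$-chord ending at $z$. This is \emph{not} a juxtaposition of the $H$-chord with a constant $K$-chord at $z$; the entire chord is deformed by the flow of $K$, and the Hamiltonian term $\int_0^1 K(x(t))\,dt$ involves the values of $H_f$ along the whole path $x$, which leaves the neighborhood of $z$ and need not lie in $\pi^{-1}(U)$. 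So the vanishing of $H_f$ near $z$ does not by itself kill the $K$-contribution.

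What the paper does is construct an explicit bijection $\Theta:[x,\overline{x}]\mapsto[\varphi_K^t(x(t)),\overline{x}\sharp u]$ with the capping $u(s,t)=\varphi_K^{st}(x(t))$, and then computes $\mathcal{A}_{K\sharp H,L}(\Theta[x,\overline{x}])-\mathcal{A}_{H,L}([x,\overline{x}])$ directly. Using autonomy of $K$ and integrating by parts in $s$ and $t$, the terms $\int K(x(t))\,dt$ and the symplectic area $\int u^*\omega$ cancel against each other, leaving the single boundary term $K(x(1))$. Since $x(1)=z\in U$ and $H_f|_U=0$, this vanishes. This cancellation is the actual content of the step you flagged as the ``main obstacle,'' and it is not a consequence of any ready-made concatenation formula.

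An alternative that would salvage your heuristic is to first replace $K\sharp H$ by a time-reparametrized concatenation (run $H$ on $[0,\tfrac12]$, then $K$ on $[\tfrac12,1]$) via a cutoff $\chi$, invoke homotopy invariance of $\spec(\cdot,L)$, and then your juxtaposition picture becomes literally correct. The paper in fact uses this device in the proof of the analogous Lemma~\ref{clm:hsv}. Either way, the step needs to be made explicit; as written, your action argument does not go through.
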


\noindent {\bf Proof of Lemma~\ref{clm:sv}.}
In the following, we identify $T^*L$ (resp. the zero section~$o_L$) with a Weinstein neighborhood of $L$ (resp. $L$) in $M$. 

First of all, it is easy to prove that for  sufficiently small $\varepsilon>0$, the Lagrangians $\varphi_{\varepsilon {H_f}}\varphi_H(L)$ and $\varphi_H(L)$ have the same intersection points with $L$ ( due to our specific choice of the function $f$ on $L$ and its lift $H_f$ on $M$).

Next, we will prove that for  sufficiently small $\varepsilon>0$,  the action spectra $\spec(\varepsilon H_f\sharp H,L)$ and $\spec(H,L)$ are the same.

For each $q\in\varphi_H(o_L)\cap o_L$, we have $$\varphi_{\varepsilon H_f}^t \varphi_H^t((\varphi_{\varepsilon H_f} \varphi_H)^{-1}(q))=\varphi_{\varepsilon H_f}^t \varphi_H^t(\varphi_H^{-1}(q)).$$ It is well known that there is a one-to-one correspondence between the set $\varphi_H(L)\cap L$ and the set $\cP_L(H):=\{x\in\cP_L|\dot{x}=X_H(x(t))\}$ of Hamiltonian chords by sending $q\in \varphi_H(L)\cap L$ to $x=\varphi_H^t(\varphi^{-1}_H(q))$. So we obtain a bijection between $\cP_L(H)$ and $\cP_L(\varepsilon H_f\sharp H)$ by mapping $x(t)$ to $\varphi_{\varepsilon H_f}^t(x(t))$.  For simplicity, we write $K=\varepsilon H_f$ and put $u(s,t)=\varphi_K^{st}(x(t))$ for $x(t)\in\cP_L(H)$, where $(s,t)\in S:=[0,1]\times[0,1]$. Then $u(0,t)=x(t)$ and $u(1,t)=\varphi_K^t(x(t))\in\cP_L(K\sharp H)$. To show that $\spec(K\sharp H,L)=\spec(H,L)$, we  consider the map
$$\Theta:\crit(\ahl)\longrightarrow\crit(\cA_{K\sharp H,L}),\quad \big[x(t),\ox\big]\mapsto\big[\varphi_K^t(x(t)), \ox\sharp u\big]$$
as illustrated in Figure~\ref{fig:map}. Clearly, this map is a bijection. 

\begin{figure}[H]
  \centering
  \includegraphics[scale=0.8]{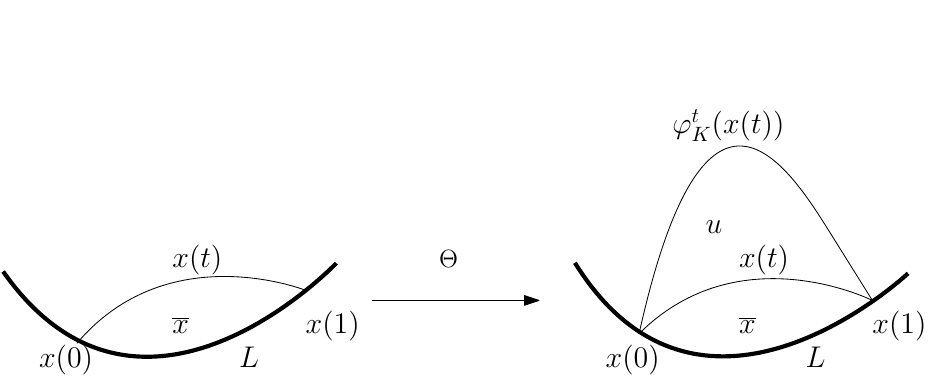}
  \caption{The map $\Theta$}\label{fig:map}
\end{figure}


In the following we will show that $\Theta^*\cA_{K\sharp H,L}=\ahl$. To this end, for every $[x,\ox]\in\crit(\ahl)$, we have
\begin{eqnarray}
\cA_{K\sharp H,L}(\Theta[x,\ox])&=&\int^1_0K(\varphi_K^t(x(t)))dt+\int^1_0H_t\circ(\varphi_K^t)^{-1}(\varphi_K^t(x(t)))dt-\int_Su^*\omega-\int_{\ox}\omega \notag\\
&=&\ahl([x,\ox])+\int^1_0K(x(t))dt-\int^1_0\int^1_0\omega\big(\partial_su,sX_K(\varphi_K^{st}(x(t)))\big)dsdt
\notag\\
&&-\int^1_0\int^1_0\omega\big(\partial_su,d\varphi_K^{st}(\dot{x}(t))\big)dsdt\notag\\
&=&\ahl([x,\ox])+\int^1_0K(x(t))dt-\int^1_0\int^1_0sdK(\varphi_K^{st}(x(t)))[\partial_su]dsdt\notag\\
&&-\int^1_0\int^1_0\omega\big(tX_K(\varphi_K^{st}(x(t))),d\varphi_K^{st}(\dot{x}(t))\big)dsdt\notag\\
&=&\ahl([x,\ox])+\int^1_0K(x(t))dt-\int^1_0\int^1_0
\frac{d}{ds}\big[sK(\varphi_K^{st}(x(t)))\big]dsdt
\notag\\
&&+\int^1_0\int^1_0
\frac{d}{dt}\big[tK(\varphi_K^{st}(x(t)))\big]dsdt\notag\\
&=&\ahl([x,\ox])+\int^1_0K(x(t))dt-\int^1_0K(x(t))dt+K(x(1))\notag\\
&=&\ahl([x,\ox]),\notag
\end{eqnarray}
where in the second and fifth equalities we have used the fact that the value of an autonomous Hamiltonian $H_f$ is invariant along its Hamiltonian flow, and the last equality is implied by $H_f=0$ on the intersections of $\varphi_H(L)$ and $L$. 	
Therefore, the action spectra $\spec(\varepsilon H_f\sharp H,L)$ and $\spec(H,L)$ are the same. 

Now fix $\varepsilon>0$ and consider the family of Lagrangians $\varphi_{s\varepsilon {H_f}}\varphi_H(o_L)$ with $s\in[0,1]$. As above, the action spectra $\spec(s\varepsilon H_f\sharp H,L)$ are all the same. Since the action spectrum is a closed nowhere dense subsets of $\R$ (cf. \cite[Lemma~30]{LZ}), it follows from the spectrality property (LS2) and the continuity property (LS4) that
$\ell(\al\circ\beta, s\varepsilon H_f\sharp H)$ does not depend on $s$. So we have $\ell(\al\circ\beta,H)=\ell(\al\circ\beta,\varepsilon H_f\sharp H)$.
This completes the proof of Lemma~\ref{clm:sv}. \qed

Finally, take $f$, $U$ and $H_f$ to be as in Step 2. For small enough $\varepsilon>0$, by Lemma~\ref{clm:sv}, the triangle inequality~(LS5) and (\ref{e:sineq}), we obtain
\begin{eqnarray}
\ell(\al\circ\beta,H)&=&\ell(\al\circ\beta,\varepsilon H_f\sharp H)\notag\\
&\leq& \ell(\al, \varepsilon H_f)+\ell(\beta, H)\label{eq:ineqlls}\\
&<&A_L\nu(\al)+\ell(\beta, H).\notag
\end{eqnarray}

\textbf{We prove the third statement.} 
Given $\al=\sum_ix_i\tens\lambda_i\in H_{n-N_L<*<n}(L,\Z_2)\otimes \Lambda$,  we get
\begin{eqnarray}
\ell(\al,\varepsilon H_f)&\leq&\sup\limits_i\{\ell(x_i\tens\lambda_i,\varepsilon H_f)\}\notag\\
&\leq&\sup\limits_i\{\ell(x_i,\varepsilon H_f)+A_L\nu(\lambda_i)\}\notag\\
&\leq&\sup\limits_i\{\ell(x_i,\varepsilon H_f)\}+A_L\sup\limits_i\{\nu(\lambda_i)\}\notag\\
&=&\sup\limits_i\{\cls(x_i,\varepsilon f)\}+A_L\nu(\al). \label{eq:ineqls}
\end{eqnarray}
where the first inequality is obtained by (LS7), the second one by (LS3) and the last equality by Proposition~\ref{pp:ls=minmax}.
 
If $\ell(\al\circ\beta,H)=\ell(\beta,H)+A_L\nu(\al)$, then
from (\ref{eq:ineqlls}) and (\ref{eq:ineqls})  we deduce that for sufficiently small $\varepsilon>0$, 
\[A_L\nu(\al)\leq \sup\limits_i\{\cls(x_i,\varepsilon f)\}+A_L\nu(\al).\]
Consequently, we have $0\leq\cls(x_k,\varepsilon f)=\sup_i\{\cls(x_i,\varepsilon f)\}$ for some $k$. By Proposition~\ref{pp:minmax}.3 we have $\cls(x_k,\varepsilon f)\leq\varepsilon\max_Lf=c_{LS}([L],\varepsilon f)=0$. So we have $$\cls(x_k,\varepsilon f)=\cls(x_k\cap [L],\varepsilon f)=\cls([L],\varepsilon f)=0$$ with $x_k\in H_{*<n}(L,\Z_2)$, and hence
by Proposition~\ref{pp:minmax}.4 the zero level set $\overbar{U}$ of $f$ is homologically non-trivial.\qed
\\

The proof of Theorem~\ref{thm:ll} bears a similarity to  the proof of Theorem~\ref{thm:lls}, but is not completely parallel to the former. For the sake of completeness we shall give a sketch of the proof.
\subsection{Proof of Theorem~\ref{thm:ll}}

The non-strict inequality is deduced from the module structure property~(LS6) of $\ell$, the quantum shift property~(HS2) and the normalization property~(HS1) of $\sigma$ as follows: 
$$\ell(a\bullet\al,H)=\ell(a\bullet\al, H\sharp 0)\leq \sigma(a,0)+\ell(\al,H)=I_\omega(a)+\ell(\al,H).$$

\textbf{We prove the second statement.} It suffices to prove that for any two nonzero classes $\alpha\in QH(L)$ and $a=\sum_i x_i \tens_\Gamma\lambda_i$ in $QH(M,\Lambda)$ with each homogeneous class $x_i\in H_{*<2n}(M,\Z_2)$ satisfying 
$$\ell(a\bullet\al,H)=\ell(\al,H)+I_\omega(a),$$
then $L\cap \varphi_H(L)$ is homologically non-trivial in $M$.

First of all, for any smooth $C^2$-small  function $f:M\to\R$, we have that
\begin{equation}\label{e:ineqhs}
\sigma(a,f)\leq \sup_k\{\sigma(z_k,f)\}+I_\omega(a).
\end{equation}
This follows from (HS1), (HS2) and (HS3).

Next, we take a $C^2$-small function $f:M\to\R$ such that $f=0$ on $\overbar{V}$ and $f<0$ on $M\setminus \overbar{V}$, where $V$ is any small neighborhood of $L\cap \varphi_H^{-1}(L)$ in $M$. 
We will show the following lemma whose proof is mainly influenced by the methods used in~\cite{GG,BHS1}. The proof is similar to that of Lemma~\ref{clm:sv}. 
\begin{lem}\label{clm:hsv}
	$\ell(a\bullet\al,H\sharp\varepsilon f)=\ell(a\bullet\al,H)$ whenever $\varepsilon>0$ is small enough.
\end{lem}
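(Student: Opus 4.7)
The plan is to mirror the strategy of Lemma~\ref{clm:sv}: establish that for sufficiently small $\varepsilon>0$ the action spectrum $\spec(H\sharp s\varepsilon f, L)$ is actually independent of $s\in[0,1]$. Continuity of $s\mapsto H\sharp s\varepsilon f$ in the $C^0$-topology, together with the $C^0$-continuity of $\ell$ from (LS5) and the spectrality property (LS2), will then force the continuous function $s\mapsto\ell(a\bullet\al,H\sharp s\varepsilon f)$ to take values in a single nowhere-dense closed subset of $\R$ and hence to be constant; comparing $s=0$ and $s=1$ will yield the lemma.

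To show that the spectra coincide, I would first observe that since $V$ is an \emph{open} neighborhood of $L\cap\varphi_H^{-1}(L)$ on which $f\equiv 0$, for every $q\in L\cap\varphi_H^{-1}(L)$ the Hamiltonian vector field $X_f$ vanishes on an open neighborhood of $q$, so $\varphi_{s\varepsilon f}^t(q)=q$ for all $s,t\in[0,1]$. This immediately gives $L\cap\varphi_H^{-1}(L)\subseteq L\cap(\varphi_H\varphi_{s\varepsilon f})^{-1}(L)$. For the reverse inclusion, uniformly in $s\in[0,1]$, I would argue by contradiction using compactness: if sequences $\varepsilon_n\to 0$, $s_n\in[0,1]$ and $q_n\in L\cap(\varphi_H\varphi_{s_n\varepsilon_n f})^{-1}(L)\setminus\bigl(L\cap\varphi_H^{-1}(L)\bigr)$ existed, I would pass to a subsequence with $q_n\to q_\infty\in L$ and $s_n\to s_\infty$; since $\varphi_{s_n\varepsilon_n f}\to\mathrm{id}$ in $C^0$, the limit $\varphi_H(q_\infty)$ lies in $L$, so $q_\infty\in L\cap\varphi_H^{-1}(L)\subset V$, forcing $q_n\in V$ for large $n$ and thus $\varphi_H(q_n)=\varphi_H\varphi_{s_n\varepsilon_n f}(q_n)\in L$, contradicting the choice of $q_n$. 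Next, for each $q\in L\cap\varphi_H^{-1}(L)$, the chord of $H\sharp s\varepsilon f$ starting at $q$ is $y(t)=\varphi_H^t\varphi_{s\varepsilon f}^t(q)=\varphi_H^t(q)=x(t)$, which is literally the same path (with the same equivalence class of cappings) as the corresponding $H$-chord. Using the standard composition formula $(H\sharp K)(t,z)=H(t,z)+K(t,(\varphi_H^t)^{-1}z)$ with $K=s\varepsilon f$ and evaluating along $x(t)=\varphi_H^t(q)$ produces the correction $\int_0^1 s\varepsilon f(q)\,dt=0$ since $f(q)=0$, so $\cA_{H\sharp s\varepsilon f,L}([x,\overbar x])=\ahl([x,\overbar x])$ on the whole cover $\widetilde{\cP}_L$ of critical points.

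Combining the two observations, $\spec(H\sharp s\varepsilon f,L)=\spec(H,L)$ for all $s\in[0,1]$, and the continuity-plus-spectrality argument closes the proof. The step I expect to be the technical core is the uniform compactness argument establishing the reverse inclusion of intersection sets simultaneously for all $s\in[0,1]$: this is what lets one treat the one-parameter family of action spectra as a single nowhere-dense subset of $\R$ and invoke the standard continuity argument. Everything else is a routine bookkeeping calculation with the Hamiltonian sum and the action functional.
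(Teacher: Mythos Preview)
Your argument is correct and in fact more direct than the paper's. Both proofs reduce to showing that $\spec(H\sharp s\varepsilon f,L)=\spec(H,L)$ for all $s\in[0,1]$ and then invoke continuity plus spectrality. The paper, however, does not compare $H\sharp\varepsilon f$ with $H$ directly: it first replaces $H\sharp\varepsilon f$ by a time-reparametrized Hamiltonian $K$ (which runs $\varepsilon f$ on $[0,\tfrac12]$ and $H$ on $[\tfrac12,1]$), appeals to the known invariance of $\spec(\cdot,L)$ under homotopy in the universal cover of $\ham$ with fixed mean value to equate $\spec(H\sharp\varepsilon f,L)$ with $\spec(K,L)$, and then sets up an explicit bijection $\Pi$ between $\crit(\ahl)$ and $\crit(\cA_{K,L})$ to compare actions.

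Your shortcut exploits the order of the composition: because the perturbation $\varepsilon f$ acts \emph{first} in $H\sharp\varepsilon f$ and $X_f\equiv 0$ on the open set $V\supset L\cap\varphi_H^{-1}(L)$, the $(H\sharp s\varepsilon f)$-chord through any $q\in L\cap\varphi_H^{-1}(L)$ is literally $\varphi_H^t(q)$, the same path with the same cappings as the $H$-chord; the extra term $s\varepsilon f\bigl((\varphi_H^t)^{-1}x(t)\bigr)=s\varepsilon f(q)=0$ in the action then drops out. This avoids the detour through $K$ entirely. The paper's route is the one that would be forced in the setting of Lemma~\ref{clm:sv}, where the perturbation is applied \emph{after} $H$ and the chords genuinely change; here that machinery is unnecessary, and your streamlined version is preferable.
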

\noindent {\bf Proof of Lemma~\ref{clm:hsv}.} We consider a family of spectral invariants $\ell(a\bullet\al,H\sharp s\varepsilon f)$ for $s\in[0,1]$. Note that the function $\ell(a\bullet\al,\cdot):\cH\to\R$ is continuous with respect to the $C^0$-topology and that the action spectrum $\spec(H\sharp s\varepsilon f)$ for each $s\in[0,1]$  is a closed nowhere dense subset of $\R$ (cf. \cite[Lemma~30]{LZ}). It suffices to prove that for $\varepsilon>0$ sufficiently small  the action spectra $\spec(H\sharp s\varepsilon f,L)$ are all the same. To this end, for  $\varepsilon>0$ sufficiently small we will show
\begin{equation}\label{e:intersect}
\varphi_H\circ\varphi_{\varepsilon f}(L)\cap L=\varphi_H(L)\cap L.
\end{equation}

On the one hand,  it is easy to see that $\varphi_H\circ\varphi_{\varepsilon f}(L\setminus V)\cap L=\emptyset$ for $\varepsilon>0$ small enough. 

On the other hand, since  $\varphi_{\varepsilon f}=id$ on $V$,  we obtain $\varphi_H\circ\varphi_{\varepsilon f}(L\cap V)\cap L=\varphi_H(L\cap V)\cap L=\varphi_H(L)\cap L$.
So we have (\ref{e:intersect}).

Next, we will prove that $\spec(H,L)=\spec(H\sharp\varepsilon f,L)$.

Let $\chi:[0,\frac{1}{2}]\to [0,1]$ be a smooth cut-off function so that $\chi=0$ near $t=0$ and $\chi=1$ near $t=\frac{1}{2}$. It is easy to show that the time one flow $\varphi_K$ of the Hamiltonian

\[
K(t,x)=
\begin{cases}\chi'(t)\varepsilon f(x),\quad  \ \ & t\in[0,\frac{1}{2}],\\
\chi'\big(t-\frac{1}{2}\big)H\big(\chi\big(t-\frac{1}{2}\big),x\big), \quad \ \ & t\in[\frac{1}{2},1].
\end{cases}
\]
coincides with the time one flow $\varphi_H\circ\varphi_{\varepsilon f}$ of $H\sharp\varepsilon f$ in the universal covering of the group of  Hamiltonian diffeomorphisms. Moreover, the mean values of $K$ and $H\sharp\varepsilon f$ are the same, i.e., $\int^1_0\int K_t\omega^ndt=\int^1_0\int (H\sharp\varepsilon f)_t\omega^ndt$.
It is well known that $\spec(G,L)=\spec(F,L)$ if $\varphi_G=\varphi_F$ in the universal covering of the group of  Hamiltonian diffeomorphisms with the same mean value, see e.g., \cite{LZ}. Therefore, to prove $\spec(H,L)=\spec(H\sharp\varepsilon f,L)$ we only need to prove that the action spectra
$\spec(H,L)$ and $\spec(K,L)$ are the same. 

For each $q\in\varphi_H(L)\cap L$, due to~(\ref{e:intersect}) the corresponding Hamiltonian chords of $H$ and $K$ are
$\varphi_H^t(\varphi^{-1}_H(q))$ and $\varphi_K^t(\varphi^{-1}_K(q))$ respectively, while, by $\varphi_{\varepsilon f}^t=id$ on $V$ and $\varphi_H^{-1}(q)\in V$, we have
\[
\varphi_K^t(\varphi^{-1}_K(q))=
\begin{cases}\varphi_{\varepsilon f}^{\chi(t)}\circ\varphi_H^{-1}(q)=\varphi_H^{-1}(q),\quad  \ \ & t\in[0,\frac{1}{2}],\\
\varphi^{\chi(t-\frac{1}{2})}_H\circ\varphi^{-1}_H(q), \quad \ \ & t\in[\frac{1}{2},1].
\end{cases}
\]
Consequently, we obtain a bijection
$$\Pi:\crit(\ahl)\longrightarrow\crit(\cA_{K,L}),\quad [x(t),\overbar{x}(s,t)]\mapsto [x'(t),\overbar{x}'(s,t)],$$ where $\overbar{x}(0,t)=q_0$ for some point $q_0\in L$,
$\overbar{x}(1,t)=x(t)$,  $\overbar{x}'(s,t)=\overbar{x}(s,0)$ for $(s,t)\in [0,1]\times[0,\frac{1}{2}]$ and $\overbar{x}'(s,t)=\overbar{x}(s,\chi(t-\frac{1}{2}))$ for
$(s,t)\in [0,1]\times[\frac{1}{2},1]$.
A direct calculation shows that for any $(x,\overbar{x})\in\crit(\ahl)$,
$\cA_{K,L}(\Pi[x,\ox])=\ahl([x,\ox])$, so we have $\spec(H,L)=\spec(K,L)$. 

As above, one can show $\spec(H,L)=\spec(H\sharp s\varepsilon f,L)$ for every $s\in[0,1]$. The spectrality property (LS2) and the continuity property (LS4) imply that
$\ell(a\bullet\al, H\sharp s\varepsilon f)$ does not depend on $s$. So we have 	$\ell(a\bullet\al,H\sharp \varepsilon f)=\ell(a\bullet\al,H)$.   This finishes the proof the lemma.\qed


 Now we are in a position to finish the proof. If $a=\sum_kz_k\otimes_\Gamma\lambda_k\in\widehat{QH}(M,\Lambda)$ with all $ z_k\in H_{*<2n}(M,\Z_2)$ (of pure degree), then by (\ref{e:ineqhs}) for $\varepsilon>0$ small enough, we have
$$\sigma(a,\varepsilon f)\leq \sup_k\{\sigma(z_k,\varepsilon f)\}+I_\omega(a).$$

The moduli structure property~(LS6) and Lemma~\ref{clm:hsv}
imply that
 \[\ell(a\bullet\al,H)=\ell(a\bullet\al,H\sharp \varepsilon f)\leq \sigma(a,\varepsilon f)+\ell(\al,H).\] Then we get $\sigma(z_{k_0},\varepsilon f)\geq \ell(a\bullet\al,H)-\ell(\al,H)-I_\omega(a)=0$ for some $k_0$.  It follows from the normalization property~(HS1) that     $c_{LS}(z_{k_0},\varepsilon f)=\sigma(z_{k_0},\varepsilon f)\geq0$ whenever  $\varepsilon>0$ is sufficiently small. From Proposition~\ref{pp:minmax}.3 we deduce that
$$0\leq c_{LS}(z_{k_0},\varepsilon f)= c_{LS}(z_{k_0}\cap [M],\varepsilon f)\leq c_{LS}( [M],\varepsilon f)\leq 0.$$
Therefore, we obtain $c_{LS}(z_{k_0}\cap [M],\varepsilon f)= c_{LS}( [M],\varepsilon f)$ for some $z_{k_0}\in H_{*<2n}(M,\Z_2)$. Then Proposition~\ref{pp:minmax}.4 implies that the zero level set $\overbar{V}$ of $f$ is homologically non-trivial. since $\varphi_H$ is a diffeomorphism on $M$, as a neighborhood $\overbar{\varphi_H(V)}$  of the intersection $L\cap\varphi_H(L)$ is also homologically non-trivial. The proof is completed.
\qed

\section{Proofs of Theorems~\ref{thm: Arnol'dC}--\ref{thm:more}} \label{sec:last}

\subsection{Proof of Theorem~\ref{thm: Arnol'dC}}
	By definition, there exists a Hamiltonian $H\in\cH$ such that $\varphi=\varphi_H^1$ and $\ga(L,H)<A_L$.
	
	 Without loss of generality we may assume that the intersections of $L$ and $\varphi_H(L)$ are isolated, otherwise, nothing needs to be proved.


By the definition of cup-length (cf. Section~\ref{sec:1}), there exist $u_i\in H_{*<n}(L,\Z_2)$, $i=1\ldots,k$ such that $u_1\cap\cdots\cap u_k=[pt]$ and $cl(L)=k+1$.  Since $H(L,\Z_2)$ is generated as a ring by elements in $H_{\geq n+1-N_L}(L,\Z_2)$, we may further assume that each $u_i\in H_{n-N_L<*<n}(L,\Z_2)$. 


Since for a wide Lagrangian $L$, we have a canonical embedding $H_p(L,\Z_2)\tens\Lambda_*\hookrightarrow QH_{p+*}(L)$ for any $p> n-N_L$ (see Section~\ref{subsec:emb}), 
each $u_i$ can be considered also as a Lagrangian  quantum homology class. 

Consider the sequence $\alpha_0,\dots,\alpha_k$ in $QH(L)$ so that 
\[
\alpha_0=[L]\quad\hbox{and}\quad \alpha_{i}=u_{k-i+1}\circ \alpha_{i-1},\;i=1\ldots,k.
\]

	Since the Lagrangian quantum product $\circ$ is a quantum deformation of the homological intersection product in $H(L,\Z_2)$ (see~\cite{BC,BC2,BC3}), each  $\al_i$ is nonzero.
	In particular, we have
	$$\al_k=[pt]+\sum_{r\geq 1}a_{rN_L}t^r,\quad a_{rN_L}\in H_{rN_L}(L,\Z_2).$$
	 	 This implies that $\epsilon_L([L]\circ \alpha_k)\ne 0$.
	From the Poincar\'{e} duality property~(LS8) of $\ell$
	 we infer that $-\ell([L],\overbar{H})\leq \ell(\al_k,H)$, so we have
	$$\ga(L,H)=\ell([L],H)+\ell([L],\overbar{H})\geq \ell([L],H)-\ell(\al_k,H).$$
	 It follows from  Theorem~\ref{thm:lls} and the spectrality property~(LS2) that there exist $k+1$ elements $\widetilde{x}_i=[x_i,\ox_i]\in\crit(\ahl)$  such that
	 $$\ell(\al_k,H)=\ahl(\widetilde{x}_k)<\ahl(\widetilde{x}_{k-1})<\cdots<\ahl(\widetilde{x}_0)=\ell(\al_0,H).$$
	Clearly, from $\ga(L,H)<A_L$ we exclude the possibility that one of $\{\widetilde{x}_i\}_{i=0}^k$ is the recapping of another.
	Therefore, all $x_i$ are different; equivalently, all elements of $L\cap\varphi(L)$ are distinct. This completes the proof. \qed

\subsection{Proof of Theorem~\ref{thm:two}} Assume that $\varphi\in\cH am(M,\omega)$ is generated by a Hamiltonian $H\in \cH$, i.e., $\varphi=\varphi_H^1$. Since $L$ is non-narrow, $[L]\in QH(L)$ is non-zero. Note that $[M]\bullet [L]=[L]$.
Then we have
\begin{eqnarray}\label{e:triangle}
\ell([L],H)&=&\ell\big([(t^{-\tau}  u_1)*\cdots *u_k]\bullet [L],H\big)\\\notag
&=&\ell\big((t^{-\tau} u_1)\bullet u_2\bullet\cdots\bullet(u_k\bullet [L]),H\big)\\\notag
&=&\ell\big( u_1\bullet u_2\bullet\cdots\bullet(u_k\bullet [L]),H\big)+\tau A_L
\\\notag
&<&\ell\big(u_2\bullet\cdots\bullet(u_k\bullet [L]),H\big)+\tau A_L\\\notag
&<&\cdots
\\\notag
&<&\ell\big(u_k\bullet [L],H\big)+\tau A_L,\\\notag
\end{eqnarray}
where we have used the module structure of Lagrangian quantum homology in the second equality, the quantum shift property~(LS3) in the third equality and Theorem~\ref{thm:ll} in the remaining inequalities. Using Theorem~\ref{thm:ll} again, we obtain $\ell\big(u_k\bullet [L],H\big)< \ell([L],H)+I_\omega(u_k)=\ell([L],H)$.  This together with (\ref{e:triangle}) implies that
\[
\begin{split}
\ell([L],H)-\tau A_L=\ell\big(u_1\bullet\cdots\bullet(u_k\bullet [L]),H\big)<\ell\big( u_2\bullet\cdots\bullet(u_k\bullet [L]),H\big)<\cdots\\
<\ell(u_k\bullet [L],H)<\ell([L],H).
\end{split}
\]
It follows from the spectrality property (LS2) of $\ell$ that
there exist $k+1$ critical points $\widetilde{x}_i=[x_i,\ox_i]$ of $\ahl$   such that
\begin{equation}\label{e:seq}
\ell([L],H)-\tau A_L=\ahl(\widetilde{x}_1)<\cdots<\ahl(\widetilde{x}_{k+1})=\ell([L],H).
\end{equation}

Let $\mathcal{B}$ denote the set consisting of  $a_i:=\ahl(\widetilde{x}_i)$, $i=1,\ldots, k+1$. We define an equivalence relation $\sim$ on  $\mathcal{B}$ by $a_i\sim a_j$ if $(a_i-a_j)/A_L\in\Z$. Denote by $\widehat{\mathcal{B}}$ the set of equivalence classes.
Clearly, $\#\widehat{\mathcal{B}}$ bounds from below the number of Hamiltonian paths $x_i$ (corresponding to the elements in $L\cap\varphi(L)$). So to finish the proof we only need to estimate the former. Note that $[a_1]=[a_{k+1}]$, by (\ref{e:seq}), this class contains at most $\tau+1$ different representative elements in $\mathcal{B}$ , and each other class contains at most $\tau$ different ones. Therefore, we have
$$\sharp \widehat{\mathcal{B}}\geq 1+\bigg\lceil \frac{k+1-\tau-1}{\tau}\bigg\rceil=\bigg\lceil \frac{k}{\tau}\bigg\rceil$$
which concludes the desired result.\qed

\subsection{Proof of Theorem~\ref{thm:more}} The proof is parallel to that of Theorem~\ref{thm:two}. The key idea is to use Theorem~\ref{thm:lls}. We omit the proof.\qed

\section{Hamiltonian periodic orbits and Concluding remarks}\label{sec:remarks}

In Floer theory the Lagrangian (resp. Hamiltonian) spectral invariants serve as the minmax critical value selectors, the Lagrangian (resp. Hamiltonian) Ljusternik--Schnirelman theory seems to be a very useful tool towards the Arnold conjecture for degenerate Lagrangian intersections (resp. degenerate symplectic fixed points), and could be of independent interests. In the following concluding remarks we summarize some directions of further study of this class of objects.

	 Just as we have seen in Theorem~\ref{thm:two} and Theorem~\ref{thm:more}, the lower bounds given by the algebraic  structures of Lagrangian quantum homology (more specifically, by FQF and LFQF) does not depend on the Hamiltonian diffeomorphisms on an ambient symplectic manifold. So making a lower bound estimate of Lagrangian intersections could be translated into a pure algebraic calculation as long as the product and module structures of Lagrangian quantum homology (identically, Lagrangian Floer homology) are known. 
	 Since Lagrangian spectral invariants generalize  Hamiltonian spectral invariants (see~\cite{LZ}), the Lagrangian Ljusternik--Schnirelman inequality~I in Theorem~\ref{thm:lls} can be viewed naturally as a  partial generalisation of the Hamiltonian Ljusternik--Schnirelman inequality established by Ginzburg and G\"{u}rel~\cite{GG}. Indeed, let $\Delta$ be the Lagrangian diagonal in the product symplectic manifold $(M\times M,\omega\oplus(-\omega))$, then $N_\Delta=2C_M\geq 2$.
    Clearly, if $(M,\omega)$ is monotone then the Lagrangian submanifold $\Delta$ is also monotone.  	
	 Furthermore, there is canonical algebra isomorphism $QH(\Delta)=QH(M)$ so that for every class $\al\in QH(\Delta)$ and every Hamiltonian $H\in C_c^\infty(S^1\times M)$ we have
	$$\sigma(\al,H)=\ell(\al,H\oplus 0),$$
	see~\cite[Theorem~5]{LZ} for a proof. Correspondingly, if $M^{2n}$ be  a  monotone tame symplectic manifold, then for $\al\in \widehat{QH}(M)=H_{2n-2C_M<*<2n}(M,\Z_2)\tens \Gamma$ and $\beta\in QH(M)$ we have the strict inequality 
		\[
	\sigma(\al*\beta, H)< \sigma(\beta, H)+I_\omega(\al).
	\]
	Indeed, this strict inequality holds for the weaker conditions that  $M$ is weakly monotone (see McDuff and Salamon~\cite{MS} for the definition) with a minor change of the defintion of the Novikov ring $\Gamma$, and that $\al\in \widehat{QH}(M)=H_{<2n}(M,\Z_2)\tens \Gamma$ is arbitrary, see~\cite{GG}.

 In view of this relation between these two Ljusternik--Schnirelman theories, corresponding to Thoerem~\ref{thm: Arnol'dC} one may obtain the following  Chekanov-type results.

We denote by $A_M$ the minimal positive generator of $\omega(\pi_2(M))$, and $\gamma$ the spectral norm given by the spectral invariant $\sigma$:
$$\gamma(\varphi)=\inf_{\varphi=\varphi_H}\sigma([M],H)+\sigma([M],\overbar{H}),\quad\forall \varphi\in\ham(M,\omega).$$ 

 \begin{thm}
	Let $(M,\omega)$ be a monotone symplectic manifold. Then for any $\varphi\in\ham(M,\omega)$,  
	$$\sharp {{\rm Fix}(\varphi)}\geq cl(M)$$
	provided that $\gamma(\varphi)<A_M$.
 \end{thm}

 Similar to Definition~\ref{def:fqf}, one can propose the following definition:
	\begin{df}\label{def:qf}
		Let $M^{2n}$ be  a  monotone tame symplectic manifold $(M^{2n},\omega)$. We say that $M$ has a \textit{ fundamental quantum factorization} (denoted by FQF for short)  \textit{of length $l$ with order $\kappa$} if there exist $u_1,\ldots,u_l\in H_{*<2n}(M,\Z_2)$ such that
		$$s^\kappa[M]= u_1*u_2*\cdots *u_l\quad \hbox{in}\; QH(M).$$
	\end{df}
	Similar to Theorem~\ref{thm:more}, one can obtain
	\begin{thm}\label{thm:fixpoints}
		Let $M^{2n}$ be  a  monotone tame symplectic manifold $(M^{2n},\omega)$.  Suppose that $M$ has a FQF of length $l$ with order $\kappa$.  Let $\varphi$ be any compactly supported Hamiltonian diffeomorphism on $M$ with isolated fixed points. Then $\varphi$ has at least $\lceil l/\kappa\rceil$ fixed points. 
	\end{thm}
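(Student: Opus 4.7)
The plan is to mirror the proof of Theorem~\ref{thm:two}, replacing the module structure $QH(M,\Lambda)\curvearrowright QH(L)$ with the ring structure of $QH(M,\Gamma)$ and the Lagrangian spectral invariant $\ell$ with the Hamiltonian spectral invariant $\sigma$. The central ingredient is the strict Hamiltonian Ljusternik--Schnirelman inequality already announced in the concluding remarks, namely
\[
\sigma(\alpha*\beta,H)<\sigma(\beta,H)+I_\omega(\alpha),\qquad\alpha\in\widehat{QH}(M),
\]
which I expect to hold whenever the $1$-periodic orbits of $H$ are isolated.

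First I would establish this strict inequality by adapting the three-step scheme used for Theorem~\ref{thm:lls}. Let $V\subset M$ be an open neighborhood of $\mathrm{Fix}(\varphi_H)$ (equivalently, the set of contractible $1$-periodic orbits of $H$) with $H_k(\overline V,\Z_2)=0$ for $k>0$, and choose a $C^2$-small autonomous Morse function $f:M\to\R$ with $f|_{\overline V}=0$ and $f<0$ on $M\setminus\overline V$. Step~(a): for $\alpha=\sum_k z_k\otimes_\Gamma\lambda_k\in\widehat{QH}(M)$ with $z_k\in H_{*<2n}(M,\Z_2)$, the normalization (HS1), valuation inequality (HS9) and quantum shift (HS8) yield $\sigma(\alpha,\varepsilon f)\le \sup_k c_{LS}(z_k,\varepsilon f)+I_\omega(\alpha)$; the classical Ljusternik--Schnirelman argument (Proposition~\ref{pp:minmax}, items 5 and 7) then forces $c_{LS}(z_k,\varepsilon f)<0$ for every $z_k\in H_{*<2n}(M,\Z_2)$, so $\sigma(\alpha,\varepsilon f)<I_\omega(\alpha)$ for $\varepsilon>0$ small. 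Step~(b): adapting Lemma~\ref{clm:hsv}, one shows that $\mathrm{Fix}(\varphi_{H\sharp s\varepsilon f})=\mathrm{Fix}(\varphi_H)$ for $s\in[0,1]$ and small $\varepsilon$ (because $\varphi_{\varepsilon f}$ fixes $V$ pointwise and moves $L\setminus V$ slightly), and a direct bijection $\Pi:\mathrm{Crit}(\cA_H)\to\mathrm{Crit}(\cA_{H\sharp\varepsilon f})$ preserving action values forces $\mathrm{Spec}(H\sharp s\varepsilon f)$ to be constant in $s$, whence $\sigma(\alpha*\beta,H)=\sigma(\alpha*\beta,H\sharp\varepsilon f)$ by continuity. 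Step~(c): combine (a), (b) with the triangle inequality (HS7) to conclude.

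Second, given the FQF $s^\kappa[M]=u_1*u_2*\cdots*u_l$, apply the quantum shift (HS8) to get $\sigma([M],H)=\sigma(u_1*\cdots*u_l,H)+\kappa A_M$, where $A_M:=I_\omega(s^{-1})=2\kappa_M C_M$ and $\kappa_M$ is the monotonicity constant of $M$. Since each $u_i\in H_{*<2n}(M,\Z_2)\subset\widehat{QH}(M)$ has $I_\omega(u_i)=0$, iterating the strict inequality $l$ times yields the chain of $l+1$ distinct spectral values
\[
a_1:=\sigma(u_1*\cdots*u_l,H)<a_2:=\sigma(u_2*\cdots*u_l,H)<\cdots<a_l:=\sigma(u_l,H)<a_{l+1}:=\sigma([M],H),
\]
lying in the closed interval $[a_1,a_{l+1}]$ of length exactly $\kappa A_M$.

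Third, count. By the spectrality property (HS2), each $a_i=\cA_H(\bar\gamma_i)$ for some capped contractible $1$-periodic orbit $\bar\gamma_i$. For any fixed underlying orbit $\gamma$, the set of possible action values $\{\cA_H(\bar\gamma):\bar\gamma$ caps $\gamma\}$ is an arithmetic progression with common difference $A_M$. Define $a_i\sim a_j$ iff $(a_i-a_j)/A_M\in\Z$, and let $\widehat{\mathcal B}$ be the set of equivalence classes; then $\sharp\mathrm{Fix}(\varphi)\ge\sharp\widehat{\mathcal B}$. Since $a_{l+1}-a_1=\kappa A_M$, the class $[a_1]=[a_{l+1}]$ contains at most $\kappa+1$ of the $a_i$'s while every other class contains at most $\kappa$, giving $l+1\le(\kappa+1)+(\sharp\widehat{\mathcal B}-1)\kappa$ and hence $\sharp\mathrm{Fix}(\varphi)\ge\sharp\widehat{\mathcal B}\ge\lceil l/\kappa\rceil$. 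The main obstacle is step~(b) of the first paragraph: identifying the action spectra along the homotopy $H\sharp s\varepsilon f$ requires the same delicate bookkeeping as in Lemma~\ref{clm:hsv}, and care is needed to ensure the perturbation $\varepsilon f$ does not create spurious fixed points outside $V$ while still being small enough that all Hamiltonian chords are accounted for.
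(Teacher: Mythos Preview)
Your proposal is correct and follows exactly the approach the paper outlines: the paper does not give a detailed proof of this theorem but merely says ``Similarly to Theorem~\ref{thm:more}, one can obtain'' it, after having stated the strict Hamiltonian Ljusternik--Schnirelman inequality (attributed to Ginzburg--G\"urel~\cite{GG}) in the concluding remarks. Your three-step derivation of the strict inequality, the iteration along the FQF to produce the chain $a_1<\cdots<a_{l+1}$ with $a_{l+1}-a_1=\kappa A_M$, and the equivalence-class counting are precisely the Hamiltonian analogues of the proof of Theorem~\ref{thm:two}, which is what the paper intends.
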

	With this theorem at hand, one could easily deduce from the quantum product of quantum homology/cohomology of an explicit symplectic manifold that the least number of fixed points which a Hamiltonian diffeomorphism must have. For instance, for $
	\C P^n$ we have
	\[ 
	h^{*k}= 
	\begin{cases}h^{\cap k},\quad  \ \ & 0\leq k\leq n,\\ 
	[\C P^n]s, \quad \ \ & k=n+1.
	\end{cases} 
	\] 
	Here $h=[\C P^{n-1}]\in H_{2n-2}(\C P^n,\Z_2)$ the class of a hyperplane in the quantum homology $QH(\C P^n)$. So $\C P^n$ has a FQF of length $n+1$ with order $1$, and hence every $\varphi\in\ham(\C P^n)$ must have at least $n+1$ fixed points. This recovers the classical result by Fortune~\cite{Fo}, see also~\cite{FW,Fl3}. Many other related results could also be obtained similarly. A completed list of such examples is not possible to present here, for more examples we refer to~\cite{GG}.
	
 In principle, one could extend Ljusternik--Schnirelman theory to a more general background of spectral invariants, for instance, the \textit{boundary depth} introduced by Usher~\cite{Us,Us2}, 
	\textit{spectral length} introduced by Atallah and Shelukhin~\cite{AS},  \textit{spectral invariants with bulk}~introduced by Fukaya \textit{et al }~\cite{FOOO}, \textit{PHF spectral invariants} introduced by Edtmair and Hutchings~\cite{EH}  
	and so on. After that one may use various versions of Ljusternik--Schnirelman theory to study properties of Hamiltonian dynamics or symplectic geometry,{\footnote{which we will investigate in our future work~\cite{Go4}}} e.g., making more refined estimates of the numbers of Lagrangian intersections,  periodic orbits, Reeb chords, etc. For more applications of Ljusternik--Schnirelman theory we refer to~\cite{Go1,Go2,Go3,Go4}.


\appendix
\section{Mean value inequality for subharmonic functions.}

Denote by
\[
\Delta=\frac{\partial^2}{\partial x_1^2}+\cdots+\frac{\partial^2}{\partial x_n^2}
\]
the Laplacian operator on $\R^n$. Let
$B_r$ denote the round disk of radius $r$ centered at $0$ in $\R^n$. 

The following lemma is a generalization of the mean value inequality for subharmonic functions. It was proved by Robbin and Salamon in~\cite[Appendix A]{RS}. 
\begin{lem}[{\cite[Lemma~A.1]{RS}}]\label{lem:meanval}
Given $\rho>1$, there exists a constant $\hbar=\hbar(\rho,n)$ such that if $u:B_r\to\R$ is a bounded non-negative $C^2$-function that satisfies the inequalities
\[
\Delta u\geq-\lambda- \mu u^{(n+2)/n},\quad\int_{B_r}u<\frac{\hbar}{\mu^{n/2}}
\]
for some constants $\lambda,\mu\geq 0$, then 
\[
u(0)\leq \frac{\lambda r^2}{2n+4}+\frac{\rho}{\rm{Vol}(B_r)}\int_{B_r}u.	
\]
\end{lem}

Here we remark that if $\mu=0$ then the condition $\int_{B_r}u<\rho/\mu^{n/2}$ can be omitted and the estimate for $u(0)$ holds with $\rho=1$.

\section{Apriori estimates.}

Throughout this section, let $\rho$ be a Riemannian metric on $L$, and $J_\rho$ the natural almost complex structure induced by $\rho$ which is compatible with the canonical symplectic form $\sum dq_i\wedge dp_i$. Denote by $\nabla$ the Levi-Civita connection with respect to the metric  $\rho$. Denote by $|\cdot|$ the norm induced by $\rho$. 
Let $\chi(s)$ be a smooth cutoff function satisfying $\chi(s)=0$ for $s\leq 0$ and $\chi(s)=1$ for $s\geq 1$. For $f\in C^\infty(L,\R)$, let $H_f$ denote the lift of $f$ to $T^*L$ given as in Section~\ref{sec:lsi=cls}. The following lemma is a generalization of well-known apriori estimates for Floer strips in $T^*L$. The proof below uses a classical trick by A. Floer~\cite{Fl}. Our argument follows closely the line in~\cite{KO} where Kasturirangan and Oh obtained a similar inequality. 

\begin{lem}\label{lem:apriori}
Let $u:\R\times [0,1]\to T^*L$  be a smooth solution with $u(s,t)=(q(s,t),p(s,t))$ to the equation~(\ref{e:floereq}). 
If $f$ has a sufficiently small $C^2$-norm, then the function \[ g(s)=\frac{1}{2}\int^1_0|p(s,t)|^2dt\] satisfies the differential inequality
\[
\frac{\partial^2 g}{ds^2}\geq \frac{1}{2}
\int^1_0\big(|\nabla_s p|^2+|\nabla_t p|^2\big)dt.
\]
\end{lem}

\begin{proof}
	Following the idea of Floer~\cite[Theorem~2]{Fl}, we rewrite the Floer equation as 
	\begin{equation}\label{e:fleq}
	\begin{cases}
	\frac{\partial q}{\partial s}-\nabla_tp+\chi(s)\lambda(|p|)\hbox{grad}_\rho f(q)=0,\\
	\frac{\partial q}{\partial t}+\nabla_sp+\chi(s)f(q)\lambda'(|p|)\frac{p}{|p|}=0\\
	\end{cases}
	\end{equation}
	  with the boundary condition $p(s,0)=p(s,1)=0$.  Consider
	  \begin{equation}\label{e:2nd}
		\frac{\partial^2 g}{ds^2}=\int^1_0\big(\langle \nabla_s p,\nabla_s p\rangle+\langle \nabla_s\nabla_s p, p\rangle\big)dt.
		\end{equation}
	  To estimate the right hand side of the above equality,  
	  using the second equality in~(\ref{e:fleq}), we obtain
\begin{eqnarray}
	  \nabla_s\nabla_s p&=&\nabla_s\bigg(-\frac{\partial q}{\partial t}-\chi(s)f(q)\lambda'(|p|)\frac{p}{|p|}\bigg)\notag\\
	  &=&-\nabla_t \frac{\partial q}{\partial s}-\chi'(s)f(q)\lambda'(|p|)\frac{p}{|p|}-\chi(s)\nabla_s\bigg(f(q)\lambda'(|p|)\frac{p}{|p|}\bigg)\notag
\end{eqnarray}

By the first equality in~(\ref{e:fleq}), 
\[
\nabla_t \frac{\partial q}{\partial s}=\nabla_t \nabla_t p-\chi(s)\nabla_t \big(\lambda(|p|)\hbox{grad}_\rho f(q)\big),
\]
and so 
\[\nabla_s\nabla_s p=-\nabla_t \nabla_t p+\chi(s)\nabla_t \big(\lambda(|p|)\hbox{grad}_\rho f(q)\big)-\chi'(s)f(q)\lambda'(|p|)\frac{p}{|p|}-\chi(s)\nabla_s\bigg(f(q)\lambda'(|p|)\frac{p}{|p|}\bigg)\]
Hence, 
\begin{eqnarray}
	\int^1_0\langle \nabla_s\nabla_s p, p\rangle dt&=&-\int^1_0 \langle \nabla_t \nabla_t p, p\rangle dt-\int^1_0\chi'(s)f(q)\lambda'(|p|)|p|dt\notag\\
	&&+\chi(s)\int^1_0 \big\langle \nabla_t \big(\lambda(|p|)\hbox{grad}_\rho f(q)\big)-\nabla_s\bigg(f(q)\lambda'(|p|)\frac{p}{|p|}\bigg), p\big\rangle dt\notag\\
	&=&\int^1_0\langle \nabla_t p, \nabla_t p\rangle dt-\chi'(s)\int^1_0f(q)\lambda'(|p|)|p|dt\label{e:parint}\\ &&+\chi(s)\int^1_0 \big\langle \nabla_t \big(\lambda(|p|)\hbox{grad}_\rho f(q)\big)-\nabla_s\bigg(f(q)\lambda'(|p|)\frac{p}{|p|}\bigg), p\big\rangle dt\label{e:derp}
\end{eqnarray}
where in (\ref{e:parint}) we have used integration by parts and the condition $p(s,0)=p(s,1)=0$. Now we consider the two terms in (\ref{e:derp}) to obtain
\[\nabla_t \lambda(|p|)=\lambda'(|p|)\frac{\langle \nabla_t p,p\rangle}{|p|},\]
\[\nabla_t \big(\hbox{grad}_\rho f(q)\big)=\nabla (\hbox{grad}_\rho f)\cdot \frac{\partial q}{\partial t}=\nabla (\hbox{grad}_\rho f)\cdot \bigg(-\nabla_sp-\chi(s)f(q)\frac{\lambda'(|p|)}{|p|}\cdot p\bigg),\]
\[\frac{\partial}{\partial s} \bigg(\frac{\lambda'(|p|)}{|p|}\bigg)=\bigg(\frac{\lambda'(r)}{r}\bigg)'\bigg|_{r=|p|}\cdot \frac{\langle \nabla_t p,p\rangle}{|p|},\]
\[\frac{\partial}{\partial s} f(q)=df(q)\bigg(\frac{\partial q}{\partial s}\bigg)=\big\langle\hbox{grad}_\rho f(q),\nabla_tp-\chi(s)\lambda(|p|)\hbox{grad}_\rho f(q)\big\rangle.\]

By our choices of the functions $\chi$ and $\lambda$, there exists a universal constant $C_1>0$ such that 
\[
\chi(s),\;|\chi'(s)|,\; \lambda(r),\; |\lambda'(r)|,\;\bigg|\frac{\lambda'(r)}{r}\bigg|,\;\bigg|r\bigg(\frac{\lambda'(r)}{r}\bigg)'\bigg|<C_1 
\]
for all $s,r$. Using these inequatilies we obtain 
\begin{eqnarray}
	\bigg|-\chi'(s)\int^1_0f(q)\lambda'(|p|)|p|dt&+&\chi(s)\int^1_0 \bigg\langle \nabla_t \big(\lambda(|p|)\hbox{grad}_\rho f(q)\big)-\nabla_s\bigg(f(q)\lambda'(|p|)\frac{p}{|p|}\bigg), p\bigg\rangle dt\bigg|\notag\\
	&\leq& C_2 \|f\|_{C^2}\int^1_0\big(|\nabla_t p||p|+|\nabla_s p||p|+|p|^2\big)dt\notag\\
	&\leq& C_3\|f\|_{C^2}\int^1_0\big(|\nabla_s p|^2+|\nabla_t p|^2+|p|^2\big)dt\notag\\
	&\leq& C_4\|f\|_{C^2}\int^1_0\big(|\nabla_s p|^2+|\nabla_t p|^2\big)dt\label{e:normest}
\end{eqnarray}
where in the last inequality we have used the Poincar\'{e} inequality 
\[\int^1_0|p|^2dt\leq C\int^1_0|\nabla_t p|^2dt\]
for the function $p(s,t)$ with $p(s,0)=0$ for all $s$. 

Since $\|f\|_{C^2}$ is small enough, one can assume that 
\[C_4\|f\|_{C^2}<\frac{1}{2}.\]
Then we substitute (\ref{e:normest}) into (\ref{e:parint}) and (\ref{e:derp}) to obtain
\begin{eqnarray}
\int^1_0\langle \nabla_s\nabla_s p, p\rangle dt	&\geq& \int^1_0|\nabla_t p|^2 dt-\frac{1}{2}\int^1_0\big(|\nabla_s p|^2+|\nabla_t p|^2\big)dt\notag\\
&=&\frac{1}{2}\int^1_0|\nabla_t p|^2 dt-\frac{1}{2}\int^1_0|\nabla_s p|^2 dt\notag
\end{eqnarray}
Combining this with (\ref{e:2nd}), we obtain the desired inequality. \end{proof}

 \end{document}